\newtheorem{theorem}{Theorem}[section]
\newtheorem{proposition}[theorem]{Proposition}
\newtheorem{remark}[theorem]{Remark}
\newtheorem{lemma}[theorem]{Lemma}
\numberwithin{equation}{section}
\newcommand{\R}{\mathbb R}
\newcommand{\C}{\mathbb C} 
\newcommand{\be}{\begin{equation}}
\newcommand{\ee}{\end{equation}}
\newcommand{\ba}{\begin{eqnarray}}
\newcommand{\ea}{\end{eqnarray}}
\newcommand{\beq}{\begin{equation}}
\newcommand{\eeq}{\end{equation}}
\newcommand{\norm}[1]{\left\Vert#1\right\Vert}
\newcommand{\D}{\displaystyle}
\newcommand{\T}{{\LARGE$\tau$\,}}
\newcommand{\n}{n}
\newcommand{\Bi}[2]{\int\limits_{\partial\Omega}#1#2}
\renewcommand{\S}{\mathcal S}
\newcommand{\N}{\mathcal N}
\title[Control of underwater vehicles]{Control of underwater vehicles in inviscid fluids--II. Flows with vorticity}
\author{Rodrigo Lecaros}
\address{CMM - Centro de Modelamiento Matem\'atico. 
Universidad de Chile (UMI CNRS 2807), 
\hfill\break\indent 
Avenida Blanco Encalada 2120, Casilla 170-3, Correo 3, Santiago, Chile\medskip\medskip\medskip}
\email{rlecaros@dim.uchile.cl}
\author{Lionel Rosier}
\address{Centre Automatique et Syst\`emes\\ MINES ParisTech, PSL Research University\\
60 Boulevard Saint-Michel\\ 75272 Paris Cedex 06, France  
\medskip\medskip\medskip}
\email{Lionel.Rosier@mines-paristech.fr}
\begin{document}
\begin{abstract}
In a recent paper, the authors investigated the controllability of an underwater vehicle immersed in an infinite volume of an 
inviscid fluid, assuming that the flow was irrotational. The aim of the present paper is to pursue this study by considering the more general case
of a flow {\em with vorticity}.  It is shown here that the local controllability of the position and the velocity of the underwater vehicle (a vector in $\R ^{12}$) 
 holds in a flow {\em with vorticity}  whenever it holds in a flow {\em without vorticity}.  
\end{abstract}

\maketitle
%\tableofcontents

\section{Introduction}
An accurate model for the motion of a boat (without rudder)  equipped with tunnel thrusters was  investigated in \cite{GR}. 
In that paper, using Coron's return method (see \cite{coron-book}), the authors proved that it was in general possible to control 
both the position and the velocity of the boat (a vector in $\R ^6$) by using {\em two} control inputs. 
The fluid %(filling $\R ^2 \setminus {\mathcal S}$) 
was assumed to be inviscid, but not necessarily irrotational, and its motion was described by Euler equations for incompressible fluids. 

In \cite{LR}, the authors started the study of the controllability of an underwater vehicle $\mathcal S$  (e.g. a submarine) immersed in an infinite volume
of an inviscid fluid (filling $\R ^3\setminus {\mathcal S}$). Assuming that the fluid was irrotational, they proved by using  Coron's return method 
the controllability of both the position and the velocity of the vehicle (a vector in $\R^{12}$) 
by using $6$, or $4$, or merely $3$ control inputs for appropriate geometries. The aim of the present paper is to pursue this study by considering the more general 
case of a flow {\em with vorticity}. We will show that the local controllability of both the position and the velocity of the underwater vehicle holds in a flow {\em with vorticity}
whenever it holds in a flow {\em without vorticity}. The method of proof is inspired by the one of \cite{GR}:  the extension of the exact controllability to a system with a  (small) vorticity
is achieved  by a perturbative approach relying on a topological argument. Next, the small vorticity assumption is removed by using a scaling argument.  However, to prove the wellposedness of the complete system we shall use here the contraction mapping theorem 
instead of the Schauder fixed-point theorem as in \cite{GR}. This choice leads to a more straightforward proof.

Our fluid-structure interaction system can be described as follows. The underwater vehicle, represented by a rigid body occupying a connected 
compact set $\S (t)\subset \R ^3$, is surrounded by a homogeneous incompressible perfect fluid filling the open set $\Omega (t) :=\R ^3\setminus \S (t)$ 
(as e.g. for a submarine immersed in an ocean).
We assume that $\Omega (t)$ is $C^\infty$ smooth and connected.
 Let  $\S =\S (0)$ and 
 $\Omega(0)= \R^3\setminus \S (0)$ 
 denote  the initial configuration ($t=0$). Then, the dynamics of the fluid-structure system are governed by the following system of PDE's 

\begin{eqnarray}\label{euler1}
\D\frac{\partial u}{\partial t}+(u\cdot\nabla)u+\nabla p=0,&& t\in(0,T), \; x\in\Omega(t),\qquad \\ 
\textrm{div } u =0,&& t\in(0,T), \; x\in\Omega(t),\qquad \\ 
u\cdot \n=(h'+\zeta  \times  (x-h))\cdot\n+w(t,x),&& t\in(0,T),\; x\in\partial\Omega(t),\qquad \\ 
\lim\limits_{|x|\to+\infty}u(t,x)=0,&& t\in (0,T),  \\ 
m_0h''=\D\int\limits_{\partial\Omega(t)} p\n \,d\sigma,&&t\in(0,T), \\ 
\D\frac{d}{dt}(QJ_0Q^\ast \zeta)=\D\int\limits_{\partial\Omega(t)} (x-h)\times p\n \,d\sigma ,&&t\in(0,T),\\ 
\label{eq for Q}
Q'= S(\zeta)Q ,&&t\in(0,T),\\ 
u(0,x)=u_0(x),&& x\in \Omega  (0), \\
 \label{original_system_f}
 (h(0),Q(0),h'(0),\zeta (0))=(h_0,Q_0,h_1,\zeta _0)&\in&\R^3\times \text{SO}(3)\times \R^3\times \R ^3.\label{euler6}
 \end{eqnarray}
In the above equations, $u$ (resp. $p$) is the velocity field (resp. the pressure) of the fluid, $h$ denotes the position of the center of mass of the solid, $\zeta$ denotes the angular velocity and $Q \in \R ^{3\times 3} $ the rotation matrix giving the orientation of the solid. 
The positive constant $m_0$ and the matrix $J_0$, which stand for 
the mass and the inertia matrix of the rigid body, respectively,
are defined  as
$$m_0= \int\limits_{\S} \rho(x)dx,\;\;\;J_0=\D\int\limits_{\S}\rho(x)(|x|^2 Id- xx^\ast)dx, $$
where $\rho(\cdot)$ represents the density of the rigid body. 
The vector $\n$  is the outward unit vector to $\partial\Omega(t)$,
$x\times y$ is  the cross product between the vectors $x$ and $y$,  and $S(y)$ is the skew-adjoint matrix such that
$S(y)x=y\times x$, i.e.
$$
S(y)=
\left( \begin{array} {ccc}
0  & -y_3 & y_2 \\
y_3 & 0 & -y_1 \\
-y_2 & y_1 & 0
\end{array}
\right) .
$$
The neutral buoyancy condition reads
\be
\label{buoyancy}
 \int\limits_{\S} \rho(x)dx = \int\limits_{\S} 1dx.
\ee

When $f$ is a function depending on $t$, $f'$ (or $\dot f$) stands for  the derivative of $f$ with respect to  $t$. 
For $A\in \R ^{M\times N}$ ($M,N\in {\mathbb N}^\ast $),  
$A^\ast$ denotes the transpose of the matrix $A$, and $Id$ denotes the identity matrix. The term $w(t, x)$, which 
stands for the flow through the boundary of the rigid body, is taken as control input. Its support will be strictly included in $\partial\Omega(t)$, 
and actually only a finite dimensional control input will be considered here (see below (\ref{form_control}) for the precise form of 
the control term $w(t, x)$).

When no control is applied (i.e. $w(t,x)=0$), then the existence
and uniqueness of strong solutions to \eqref{euler1}-\eqref{euler6} was
obtained first in \cite{ORT1} for a ball embedded in $\R^2$, and next in \cite{ORT2} for a rigid body $\S$ of
arbitrary form (still in $\R ^2$).  The case of a ball in $\R ^3$ was investigated in  \cite{RR2008}, 
and the case of a rigid body of arbitrary form in $\R ^3$ was studied in \cite{WZ}.  (See also \cite{sueur} for the motion of a rigid body in the inviscid limit of Navier-Stokes equations 
and \cite{GST} for the time regularity of the flow.) The detection of a rigid body ${\mathcal S} (t)$ from a partial measurement of the fluid velocity (or of the pressure) has been tackled in 
\cite{CCOR} when $\Omega (t) = \Omega _0 \setminus {\mathcal S} (t)$ ($\Omega _0\subset \R ^2$  denoting a fixed cavity) and in \cite{CMM} 
when $\Omega (t) = \R ^2\setminus {\mathcal S} (t) $.   

Note also that since the fluid is flowing through a part of the boundary of the rigid body, additional  boundary conditions 
are needed to ensure the uniqueness of the solution of \eqref{euler1}-\eqref{euler6} (see \cite{Yudovich64}, 
\cite{Kazhikhov}). In dimension three, one can specify the tangent components of the 
vorticity $\omega (t, x) := \textrm{curl }v(t, x)$ on the inflow section; 
that is, one can set
\beq
\omega (t,x)\cdot \tau _i = g_0(t,x)\cdot \tau _i \;\;\textrm{for }w(t,x) < 0,\ i=1,2,
\eeq
where $g_0(t, x)$ is a given function and $\tau_i$, $i=1,2$,  are linearly independent vectors tangent to 
$\partial \Omega (t)$. On the other hand, since $ \omega $ is divergence-free in $\Omega$, we have that $\int_{\partial \Omega (t) } \omega (t,x) \cdot n\,  d\sigma =0$.

In order to write the equations of the fluid in a {\em fixed frame}, we perform a change of coordinates. We set 
\begin{eqnarray}
x&=&Q(t)y+h(t),\\ 
v(t,y)&=&Q^\ast(t) u(t,Q(t)y+h(t)),\\ 
{\bf q(}t,y)&=&p(t,Q(t)y+h(t)),\\
\label{def l}
l(t)&=&Q^\ast(t)h'(t), \\
\label{def r} 
r(t)&=&Q^\ast (t) \zeta (t).
\end{eqnarray}
Then  $x$ (resp. $y$) represents the vector of coordinates of a point in a fixed frame (respectively in 
a frame linked to the rigid body). %We may without loss of generality assume that 
%\[
%h(0)=0, \qquad Q(0)=Id.
%\]
Note that, at any given time $t$, $y$ ranges over the fixed domain 
\[ 
\Omega  := Q_0^* ( \Omega (0) - h_0 )
\]
when $x$ ranges over $\Omega(t)$. Finally, we assume that the control takes the form

\beq\label{form_control}
w(t, x) =  w(t, Q(t)y + h(t)) =\sum_{j=1}^m w_j (t)\chi_j(y),
\eeq
where $m \in {\mathbb N} ^\ast$ stands for the number of independent inputs, and $w_j(t) \in \R$ is the control 
input associated with the function $\chi_j \in C^\infty(\partial\Omega)$. To ensure the conservation of the mass of the fluid, we impose the relation

\beq
\Bi{\chi_j(y)d\sigma}=0\; \textrm{ for }1\leq j\leq m.
\eeq

Then the functions $(v, {\bf q}, l, r)$ satisfy the following system

 \begin{eqnarray} 
 \label{S1}
\D\frac{\partial v}{\partial t} +((v-l-r\times y)\cdot\nabla)v+r\times v+ \nabla {\bf q}=0,&& t\in(0,T),\; y\in\Omega , \qquad \qquad \\ 
\label{S2} 
\textrm{div }v=0,&& t\in(0,T),\; y\in\Omega ,\qquad \\ 
\label{S3}
\D v\cdot \n=(l+r\times y)\cdot\n+\sum\limits_{1\leq j\leq m}w_j(t)\chi_j(y),&& t\in(0,T),\; y\in\partial\Omega , \qquad \qquad \\ 
\label{S4}
\lim\limits_{|y|\to+\infty}v(t,y)=0,&& t\in (0,T), \qquad \qquad \\
\label{S5}
m_0\dot l=\D\int\limits_{\partial\Omega} {\bf q}\n \,d\sigma -m_0r\times l,&&t\in(0,T),\\ 
\label{S6}
J_0 \dot r=\D\int\limits_{\partial\Omega} {\bf q}(y\times \n) \,d\sigma-r\times J_0r,&&t\in(0,T),\\
 \label{S7}
(l(0),r(0))= (l_0,r_0) := (Q_0^\ast h_1, Q_0^\ast \zeta _0), &&v(0,y)=v_0(y):=Q_0^\ast   u_0(Q_0 y+h_0).\qquad \qquad 
\end{eqnarray}

The initial velocity field $v_0\in C^{2,\alpha}(\overline{\Omega})$ has to satisfy the following compatibility conditions
\beq
\label{qwe}
\left\{
\begin{array}{rlll}
\textnormal{curl}\,v_0&=&\omega_0 & \textnormal{in } \Omega,\\
\textnormal{div}\,v_0&=&0 & \textnormal{in } \Omega,\\
v_0\cdot\n&=&(l_0+r_0\times y)\cdot \n+\sum\limits_{1\leq j\leq m}w_j(0)\chi_j(y) & \textnormal{on } \partial\Omega,\\
\lim\limits_{|y|\to+\infty}v_0(y)&=&0&
\end{array}
\right.
\eeq
where $\omega _0 : =\textrm{curl } v_0$ is the {\em initial vorticity}.

Once  $(l,r)$ is known, the motion of the underwater vehicle is described by the system
\ba
Q'(t)&=&Q(t)S(r(t)),\\
% \label{def l}
h'(t)&=&Q(t)l(t), \\
%\label{def r} 
\zeta(t)&=&Q (t)  r(t).
\ea

Using quaternions, the rotation matrix $Q$ can be parametrized by 
\[ \vec{q}\in  B_1(0) := \{\vec{q}=(q_1,q_2,q_3)\in\R^3;\;\; | \vec{q} | := \sqrt{q_1^2 + q_2^2 +q_3^3}  <1\} \]
 (see e.g. \cite{LR}); 
namely, we can write $Q=R(\vec{q})$ where 
\[
R(\vec{q}) := \left( 
\begin{array}{ccc}
q_0^2 +q_1^2 -q_2^2 -q_3^2   &  2(q_1q_2-q_0q_3)                       &  2(q_1q_3+q_0q_2)                    \\
2(q_2q_1+q_0q_3)                      &  q_0^2 -q_1^2 +q_2^2 -q_3^2   &  2(q_2q_3 -q_0q_1)                    \\
2(q_3q_1-q_0q_2)                       &  2(q_3q_2 + q_0q_1)                    &  q_0^2 -q_1^2 -q_2^2 +q_3^2
\end{array}
\right) .
\]
with  $\vec{q}=(q_1,q_2,q_3)\in B_1(0)$ and  $q_0 :=\sqrt{1- | \vec{q} | ^2}$. Let $Q_0=R(\vec{q}_0)$ with 
$\vec{q}_0 = (q_{1,0}, q_{2,0}, q_{3,0})$. 

Then  the dynamics of $\vec{q}$ and $h$  are given by

\beq\label{systempq}
\left\{\begin{array}{ccl}
h'(t)&=&  (1- |  \vec{q} | ^2) l+ 2\sqrt{1- | \vec{q} | ^2}\,   \vec{q}\times l  + (l\cdot  \vec{q}\, ) \vec{q} -  \vec{q} \times l\times  \vec{q}, \\[3mm]
{ \vec{q}\, }'(t)&=& \frac{1}{2} (\sqrt{1- | \vec{q} |^2} \, r +  \vec{q} \times r ), \\[3mm]
h(0)&=&h_0, \quad \vec{q}(0)=\vec{q}_0.\\[3mm] 
\end{array}\right. 
\eeq

When there is no vorticity ($\omega \equiv 0$), sufficient conditions of local exact controllability 
for $(h, \vec{q}, l, r)$ were derived in \cite[Theorem 3.10]{LR}.  That result was applied to the controllability of an ellipsoidal  submarine with a small
number of controls: $m\in \{ 3, 4 ,6 \}$. The reader is referred to \cite{LR} for precise statements. 
The method of proof of  \cite[Theorem 3.10]{LR}, inspired by the one of \cite[Theorem 2.1]{GR}, combined Coron's return method (see \cite{coron-book}), the flatness approach
(for the construction of the reference trajectory) and a variant of Silverman-Meadows criteria.   
In the following, we shall assume that the conclusion of   \cite[Theorem 3.10]{LR} (controllability
without vorticity) holds; namely, \\[3mm]
(H) {\em For any $T>0$, there exist a number $\eta >0$ and a map  $W \in C^1(B_{\R ^{24}}(0, \eta ), C^1( [0,T]; \R ^m))$ which associates with 
any $(h_0,\vec{q}_0,l_0,r_0,h_T, \vec{q}_T, l_T, r_T)\in B_{\R ^{24}}(0, \eta )$ a control $w \in C^1([0,T], \R ^m)$ with $w(0)=0$ steering
the state of system \eqref{S1}-\eqref{S7} and \eqref{systempq}
{\em without vorticity} from $(h_0,\vec{q}_0,l_0,r_0)$ at $t=0$ to $(h_T,\vec{q}_T,l_T,r_T)$ at $t=T$.}\\[1mm]  

In (H), we used the obvious notation: $B_{\R ^N} (0,\eta ) :=\{ x\in \R^N; \ |x| <\eta \}$. 

The aim of this work is to extend this property to the more general case of fluids with vorticity. Here, we shall use the contraction mapping theorem (instead of a compactness approach as in 
\cite{GR}) to obtain in a direct way the existence and uniqueness of the solution of \eqref{S1}-\eqref{S7}.
The main result in this paper
is the following

\begin{theorem} 
\label{thm1}
Assume that the assumption (H) is fulfilled, and pick any
$T_0>0$. Then there exists  $\eta >0$ such that for any $(h_0,\vec{q}_0,l_0,r_0)\in \R ^{12}$
and any $(h_T,\vec{q}_T,l_T,r_T)\in \R ^{12}$ with
\[
|(h_0,\vec{q}_0)| < \eta,\quad |(h_T,\vec{q}_T)|< \eta ,
\]
and for any $\omega_0\in C^{1,\alpha}(\overline{\Omega})\cap 
M^p_{1, \delta + 2} \cap M^p_{0, \delta + 3}$ (see below for the definition of these spaces) with
\begin{eqnarray*}
&&\vert \omega _0 (y^1)-\omega _0(y^2) \vert \le \frac{K}{ [1 + \min(\vert y^1 \vert, \vert y^2 \vert )]^\kappa } \vert y^1-y^2 \vert, \quad \forall (y^1,y^2)\in \Omega ^2 , \\
&&\vert \frac{\partial \omega _0}{\partial y} \vert = O(\vert  y\vert ^{-1} ) \quad \text{ as }  \vert y\vert \to +\infty , \\
&&  \vert \frac{\partial \omega _0}{\partial y}  (y^1)- \frac{\partial \omega _0}{\partial y} (y^2) \vert \le \frac{K}{1 + \min (\vert y^1 \vert, \vert y^2 \vert )} \vert y^1-y^2 \vert, \quad \forall 
(y^1,y^2)\in \Omega ^2 
\end{eqnarray*}
for some constants $p \in (3,4]$, $\delta \in [0, 1-\frac{3}{p} )$, $\alpha\in ( 0,1-\frac{3}{p} ]$, 
$\kappa >  3 + \delta  + \frac{3}{p}$ and  $K>0$, 
if $v_0$ denotes the solution of \eqref{qwe} with $w_j(0)=0$ for $1\le j\le m$, then
there exist a time $T\in (0,T_0]$ and a control input $w\in C^1([0,T] ;\R ^m)$
with $w(0)=0$ such that the system \eqref{S1}-\eqref{S7} and \eqref{systempq}
admits a solution $(h,\vec{q} ,l,r,v,{\mathbf q})$ satisfying 
\begin{equation*}
(h,\vec{q},l,r)_{|t=T}=(h_T,\vec{q}_T,l_T,r_T).
\end{equation*}
\end{theorem}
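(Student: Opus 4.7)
The proof has two layers: wellposedness of the fluid--body system \eqref{S1}--\eqref{S7} for a given control $w$, then a fixed-point argument over the control that uses the map $W$ from hypothesis~(H) as a black box. I would first reduce to small initial vorticity by the scaling $y=\lambda\tilde y$, $t=\lambda\tilde t$, $\lambda\in(0,1]$, under which the rescaled vorticity $\lambda\,\omega_0(\lambda\cdot)$ tends to $0$ in the norm of $C^{1,\alpha}\cap M^p_{1,\delta+2}\cap M^p_{0,\delta+3}$ as $\lambda\to 0$; the decay assumptions on $\omega_0$ and the relation $\kappa>3+\delta+\tfrac{3}{p}$ are calibrated precisely to guarantee this, and since the statement only requires $T\in(0,T_0]$, solving the rescaled problem on a unit time interval is enough. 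From now on I assume $\|\omega_0\|$ is as small as needed.

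\textbf{Inner wellposedness by contraction.} For a given control $w\in C^1([0,T];\R^m)$ with $w(0)=0$, I would decompose $v=v_{\mathrm{pot}}+v_{\mathrm{rot}}$, where $v_{\mathrm{pot}}$ is the irrotational lift determined linearly by $(l,r,w)$ via exterior potential theory and $v_{\mathrm{rot}}$ is the divergence-free field with curl $\omega$, zero normal trace on $\partial\Omega$, and vanishing at infinity, recovered from $\omega$ by a Biot--Savart-type elliptic problem whose well-posedness in the weighted spaces $M^p_{1,\delta+2}\cap M^p_{0,\delta+3}$ is classical on the unbounded exterior domain $\Omega$. The vorticity is transported by the drift $v-l-r\times y$, and the pressure ${\bf q}$ solves a Neumann problem whose surface integrals drive \eqref{S5}--\eqref{S6}. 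Setting up a contraction on a small ball in $C^1([0,T];\R^6)\times C^0([0,T];C^{1,\alpha}\cap M^p_{1,\delta+2}\cap M^p_{0,\delta+3})$ for the pair $((l,r),\omega)$ yields a unique strong solution and its Lipschitz dependence on $w$ and on the initial and terminal data; this is the step where the contraction mapping theorem replaces the Schauder argument of \cite{GR}.

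\textbf{Outer fixed point for controllability.} Once the inner solution map is in hand, define $\Psi:B_{\R^{24}}(0,\eta)\to\R^{24}$ by sending a tuple of endpoint parameters $\theta=(h_0,\vec q_0,l_0,r_0,h'_T,\vec q_T',l'_T,r'_T)$ to the actual endpoint reached by integrating the rotational system on $[0,T]$ under the irrotational control $W(\theta)$ provided by~(H). Because $W$ is $C^1$ and the perturbation to the endpoint coming from $\omega\not\equiv 0$ is $O(\|\omega_0\|)$ by the inner estimates, the map $\theta\mapsto\theta-\Psi(\theta)+(h_0,\vec q_0,l_0,r_0,h_T,\vec q_T,l_T,r_T)$ is a contraction of a small ball in $\R^{24}$ into itself, so the equation $\Psi(\theta^\ast)=(h_0,\vec q_0,l_0,r_0,h_T,\vec q_T,l_T,r_T)$ is solvable (equivalently by a Brouwer/degree argument in the spirit of \cite{GR}). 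The corresponding control $W(\theta^\ast)$ then steers the rotational system to the prescribed target, proving the theorem after unwinding the scaling.

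\textbf{Main obstacle.} The technical core is the inner contraction estimate in the weighted framework: one must establish quantitative Lipschitz bounds, uniform in the smallness parameter, on (i)~the Biot--Savart reconstruction of $v_{\mathrm{rot}}$ from $\omega$, (ii)~the Neumann problem for ${\bf q}$ with right-hand side $((v-l-r\times y)\cdot\nabla)v+r\times v$, (iii)~the transport of $\omega_0$ along characteristics that are only as regular as $v$, and (iv)~the pressure surface integrals as functionals of $(\omega,l,r)$. This is exactly why the weighted spaces $M^p_{1,\delta+2}\cap M^p_{0,\delta+3}$ and the H\"older-with-weights conditions on $\omega_0$ appear in the hypotheses rather than plain $L^p$ or $C^{1,\alpha}$: they form the minimal framework making the Biot--Savart operator bounded and the characteristic flow well-defined on the exterior domain, while still allowing the $C^1$ dependence of $W$ in (H) to close the outer fixed-point argument.
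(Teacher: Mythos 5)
Your overall architecture — contraction mapping for the fluid--body wellposedness, a fixed-point argument over the control built on hypothesis (H), and a scaling device to reduce to small data — matches the paper's strategy. But two of the ingredients as you state them are not the ones that actually close the argument, and one of them would fail outright.

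First, the scaling. You propose the joint space--time dilation $y=\lambda\tilde y$, $t=\lambda\tilde t$ and a rescaled vorticity $\lambda\,\omega_0(\lambda\cdot)$. This cannot work here: the exterior domain $\Omega$, the rigid body, and the boundary potentials $\phi_i,\varphi_i,\psi_j$ are all fixed by the geometry, and a spatial dilation would replace $\Omega$ by $\lambda^{-1}\Omega$, destroying the problem. The correct device (Coron's time-scaling from \cite{coron1}, used in the paper's final step) keeps $y$ fixed and sets $v(t,y)=\lambda^{-1}v^\lambda(\lambda^{-1}t,y)$, ${\bf q}(t,y)=\lambda^{-2}{\bf q}^\lambda(\lambda^{-1}t,y)$, $b(t)=\lambda^{-1}b^\lambda(\lambda^{-1}t)$, $w(t)=\lambda^{-1}w^\lambda(\lambda^{-1}t)$ on $[0,\lambda T]$, so that $v_0^\lambda=\lambda v_0$ and $\omega_0^\lambda=\lambda\omega_0\to 0$ with $\Omega$ untouched. (Also note that in the paper the scaling is carried out \emph{after} the small-data case, which is the natural order since the time $T$ of local existence has to be fixed before (H) can be invoked.) A side remark: the condition $\kappa>3+\delta+\tfrac{3}{p}$ is not ``calibrated for the scaling'' — it is needed for the $L^p_{p(\delta+2)}$ bound on the transported vorticity (the $I_2$ integral in the contraction estimate), where one must have $\int_{\R^3}\langle y\rangle^{p(\delta+3-\kappa)}\,dy<\infty$.

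Second, the outer fixed point. You assert that $\theta\mapsto\theta-\Psi(\theta)+(\text{target})$ is a contraction of a small ball. This needs the endpoint map to be \emph{Lipschitz} in the endpoint parameters, and that is strictly more than what the wellposedness analysis delivers: Proposition \ref{prop2} establishes only \emph{continuity} of $w\mapsto(l,r)$ (the vorticity part of that proof goes through an Aubin--Lions compactness argument, which does not yield a modulus of continuity). What the estimate \eqref{aaa1} gives is a uniform bound $|f(x_T)-x_T|\le\varepsilon$ for a merely continuous $f$, and this is exactly the hypothesis of the degree-theoretic Lemma \ref{topo}, which then yields $(1-\varepsilon)B\subset f(\overline{B})$. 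Your parenthetical ``(equivalently by a Brouwer/degree argument)'' names the correct route; the contraction version as written is not justified by the available estimates. Also, the relevant endpoint map is a self-map of a ball in $\R^{12}$ (the initial data $(a_0,b_0)$ being held fixed), not $\R^{24}$.
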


\begin{remark}
In our previous control result \cite[Theorem 3.10]{LR} for a system without vorticity, it was required that the 
initial/final velocities be small, but this restriction could easily be removed by using the same scaling argument as in the proof of Theorem 
\ref{thm1}.  
\end{remark} 
The paper is organized as follows. In Section 2 we prove the existence and uniqueness of the solution of the control problem \eqref{S1}-\eqref{S7} (the vorticity being extended to $\R^3$) by applying
the contraction mapping theorem in Kikuchi's spaces. The proof of Theorem \ref{thm1} is given in Section 3.  
%}
\section{Wellposedness of the system with vorticity}
Let us introduce some notations.
For $k\in {\mathbb N}$ and $\alpha\in(0,1)$, let $C^{k,\alpha}(\overline{\Omega})$ denote the classical H\"older space endowed with the norm
$$ \|f\|_{C^{k,\alpha}(\overline{\Omega})}=\sum_{
\tiny \begin{array}{c} 
\beta = (\beta _1, \beta _2 , \beta _3) \in{\mathbb N}^3\\
\beta _1 + \beta _2 + \beta _3 \leq k
\end{array}}\Big(\|\partial^\beta f\|_{L^\infty(\Omega)}+|\partial^\beta f|_{0,\alpha}\Big),$$
where
$$ |f|_{0,\alpha}=\sup\left\{\frac{|f(x)-f(y)|}{|x-y|^\alpha}\;\;;\;x\in\overline{\Omega},\; y\in\overline{\Omega},\; x\neq y\right\}.$$
We also need  some notations borrowed from \cite{kikuchi86}. 
Let $\langle y\rangle  = (1 + |y|^2 )^\frac{1}{2}$.  
For $s\in \mathbb N$, $p\in[1,\infty )$ and $\lambda \ge 0$,  let $M^p_{s, \lambda} $ denote the completion of the space of functions
in $C^\infty (\overline{\Omega})$ with compact support in $\overline{\Omega}$ for the norm 
\[
\Vert u\Vert _{M^p_{s, \lambda }} = 
\sum_{
\tiny \begin{array}{c} 
\beta = (\beta _1, \beta _2 , \beta _3) \in{\mathbb N}^3\\
\beta _1 + \beta _2 + \beta _3 \leq s
\end{array}}
 \Vert \langle y\rangle  ^{\lambda + \beta _1+\beta  _2+\beta  _3 }  \partial ^\beta u \Vert _{ L^p ( \Omega )}.  
\] 
In particular, for $s=0$, $\Vert u\Vert _{M^p_{0, \lambda }} = \Vert u\Vert _{L^p_{p\lambda}} := (\int_\Omega |u|^p \langle y\rangle  ^{p\lambda }dy) ^\frac{1}{p}$. 
We shall mainly use the space $M^p_{1,\lambda}$ (for the vorticity) and  $M^p_{2,\lambda }$ (for the velocity) endowed with the respective norms
\begin{eqnarray}
\Vert u\Vert _{M^p_{1, \lambda }} &=&  \Vert \langle y\rangle  ^{\lambda }  u \Vert _{ L^p ( \Omega )} + 
\sum_{1\le i\le 3}  \Vert \langle y\rangle  ^{\lambda + 1}  \partial _{y_i} u \Vert _{ L^p ( \Omega )} , \\  
\Vert u\Vert _{M^p_{2, \lambda }} &=& 
\Vert \langle y\rangle  ^{\lambda }  u \Vert _{ L^p ( \Omega )} + 
\sum_{1\le i\le 3}  \Vert \langle y\rangle  ^{\lambda + 1}  \partial _{y_i} u \Vert _{ L^p ( \Omega )}  + 
\sum_{1\le i,j\le 3} \Vert \langle y\rangle  ^{\lambda + 2}  \partial _{y_j} \partial _{y_i}  u \Vert _{ L^p ( \Omega )}.  
\end{eqnarray}

Let $\pi$ be a continuous linear extension operator from functions defined in $\Omega$ to functions defined in $\R^3$, which maps
 $C^{k,\alpha}(\overline{\Omega})$ to $C^{k,\alpha}(\R^3)$ for all $k \in \mathbb N$ and all $\alpha \in (0,1)$. The construction of such 
 a ``universal" extension operator is classical, see e.g.  \cite[p. 194]{stein}. We may also ask that $\pi$ preserves the divergence-free character, see  e.g. \cite{KMPT}.
 
We introduce some functions $\phi _i$, $i=1,2,3$, $\varphi _i$, $i=1,2,3$, and $\psi _j$, $j=1,2, ... , m$, satisfying 
\beq
\label{eq for phi}
\Delta\phi_i =\Delta\varphi_i =\Delta\psi_j =0 \qquad \textrm{ in } \Omega, 
\eeq
\beq\label{eq for phi:01}
\D\frac{\partial \phi_i}{\partial \n} = \n _i,\;\;\D\frac{\partial \varphi_i}{\partial \n} 
= (y\times \n)_i,\;\; \D\frac{\partial \psi_j}{\partial \n} = \chi_j  \qquad  \textrm{ on }\partial \Omega ,
\eeq
\beq\label{eq for phi:02}
\D\lim\limits_{|y|\to+\infty}\nabla \phi _i(y)=0,\;\; \D\lim\limits_{|y|\to+\infty}\nabla \varphi _i(y)=0,\;\;\D\lim\limits_{|y|\to+\infty}\nabla \psi _j(y)=0 .
\eeq

As the open set $\Omega$ and the functions $\chi_j$, $1 \leq j \leq m$, 
supporting the control are assumed to be smooth, we infer that the functions $\nabla\phi_i$ ($i = 1,2,3$), 
the functions $\nabla \varphi _i$ ($i=1,2,3$) and the functions $\nabla\psi_j$ ($1 \leq j \leq m$) belong to $H^{\infty}(\Omega)$. On the other hand, 
it follows from \cite[Proof of Lemma 2.7]{kikuchi86} that for all $\alpha =(\alpha _1, \alpha _2, \alpha _3) \in 
{ \mathbb N }^3$ with $\alpha _1 + \alpha _2 + \alpha _3 \ge 1$, we have
\begin{equation}
\label{estimation1}
\vert \partial ^\alpha \phi _i (y) \vert + \vert \partial ^\alpha \varphi _i (y) 
 \vert + \vert \partial ^ \alpha \psi _j (y)  \vert \le C  \langle y \rangle ^{- 1 - ( \alpha _1 + \alpha _2+ \alpha _3) }, \qquad 
i=1,2,3, \ j=1,2,...,m , \ y\in \Omega .
\end{equation}

For notational convenience, in what follows $\int_{\Omega} f$ (resp. $\int_{\partial \Omega} f$) stands for $\int_{\Omega } f(y)dy$  
(resp. $\int_{\partial \Omega} f(y) d\sigma (y)$).
  
Let us introduce the matrices  $M,J,N\in \R^{3\times 3}$,  defined by
\beq \label{matrix 1}
M_{i,j} =\int\limits_{\Omega}\nabla\phi_{i} \cdot \nabla\phi_{j}  =\Bi{\n_{i}\phi_{j}}=\Bi{\frac{\partial \phi_{i}}{\partial \n}\phi_{j}},
\eeq
\beq J_{i,j}  =\int\limits_{\Omega}\nabla\varphi_{i} \cdot \nabla\varphi_{j}  =\Bi{(y\times \n)_{i}\varphi_{j}}=\Bi{\frac{\partial \varphi_{i}}{\partial \n}\varphi_{j}},
\eeq

\beq N_{i,j}=\int\limits_{\Omega}\nabla\phi_{i}\cdot \nabla\varphi_{j}=\Bi{\n_{i}\varphi_{j}}=\Bi{\phi_{i}(y\times \n)_{j}}.
\eeq
Next we define the
matrix $\mathcal{J} \in \R^{6\times6}$ by
\beq\label{matrix J}
\mathcal{J}=
\left(\begin{array}{cc}
m_0\,  Id & 0\\
0 & J_{0}
\end{array}\right)
+\left(\begin{array}{cc}
M & N\\
N^{\ast} & J
\end{array}\right).
\eeq
It is easy to see that $\mathcal{J}$  is a (symmetric)  positive definite matrix.

For a potential flow (i.e. without vorticity),    the dynamics of $(l,r)$ are given by
\beq\label{system lr}
\left(\begin{array}{c}
l\\
r
\end{array}\right)'=\mathcal{J} ^{-1}(C w'+F(l,r,w)),
\eeq
where 
\beq\label{non lineal function}
\begin{array}{rcl}
F(l,r,w)&=& -\left(\begin{array}{cc} S(r)  & 0\\ \\ S( l )  & S(r) \end{array}\right)\left(\mathcal{J}\left(\begin{array}{c} l\\r\end{array}\right) -Cw\right) 
-\sum\limits_{p=1}^m w_{p}
\left(\begin{array}{c} \D  L^M_{p}l+R^M_{p}r+W^M_{p}w \\ \\ \D  L^J_{p}l+R^J_{p}r+W^J_{p}w 
\end{array}\right),
\end{array}
\eeq
and 
\beq\label{matrix C}
C=-\left(\begin{array}{c}C^{M}\\C^{J}\end{array}\right),
\eeq
with 
\beq (C^{M})_{i,j}=\int\limits_{\Omega}\nabla\phi_{i} \cdot \nabla\psi_{j}=\Bi{\n_{i}\psi_{j}}=\Bi{\phi_{i}\chi_{j}},
\eeq

\beq (C^{J})_{i,j}=\int\limits_{\Omega}\nabla\varphi_{i} \cdot \nabla\psi_{j}=\Bi{(y\times \n)_{i}\psi_{j}}=\Bi{\varphi_{i}\chi_{j}},
\eeq

\ba
 (L^M_{p})_{i,j}=\D\Bi{(\nabla\phi_{j})_{i}\chi_{p}}, &&(L^J_{p})_{i,j}=\D\Bi{(y\times \nabla\phi_{j})_{i}\chi_{p}},  \\
 (R^M_{p})_{i,j}=\D\Bi{(\nabla\varphi_{j})_{i}\chi_{p}}, && (R^J_{p})_{i,j}=\D\Bi{(y\times \nabla\varphi_{j})_{i}\chi_{p}}, \\ 
 (W^M_{p})_{i,j}=\D\Bi{(\nabla\psi_{j})_{i}\chi_{p}}, && (W^J_{p})_{i,j}=\D\Bi{(y\times \nabla\psi_{j})_{i} \chi_{p}}. \label{matrix 6}
\ea
We refer the reader to \cite{LR} for the derivation of \eqref{system lr}.

The first main result in this paper is a local existence result.
\begin{theorem}\label{theoreme1}
Let  $p \in (3,4]$, $\delta \in [0, 1-\frac{3}{p} )$, $\alpha\in ( 0,1-\frac{3}{p} ]$,  and $T>0$. Assume given $(l_0,r_0)\in\R^3\times\R^3$ and  $\omega_0\in C^{1,\alpha}(\overline{\Omega})\cap 
M^p_{1, \delta + 2} \cap M^p_{0, \delta + 3}$ with
\begin{eqnarray}
&&\vert \omega _0 (y^1)-\omega _0(y^2) \vert \le \frac{K}{ [1 + \min(\vert y^1 \vert, \vert y^2 \vert )]^\kappa } \vert y^1-y^2 \vert, \quad \forall (y^1,y^2)\in \Omega ^2 ,
\label{Homega} \\
&&\vert \frac{\partial \omega _0}{\partial y} \vert = O(\vert  y\vert ^{-1} ) \quad \text{ as }  \vert y\vert \to +\infty \label{zzz1}, \\
&&  \vert \frac{\partial \omega _0}{\partial y}  (y^1)- \frac{\partial \omega _0}{\partial y} (y^2) \vert \le \frac{K}{1 + \min (\vert y^1 \vert, \vert y^2 \vert )} \vert y^1-y^2 \vert, \quad \forall 
(y^1,y^2)\in \Omega ^2 
\label{zzz2}
\end{eqnarray}
for some constants $\kappa >  3 + \delta  + \frac{3}{p}$ and  $K >0$. 
Let also a control input $w\in C^1([0,T],\R^m)$ be given. 
Assume that the initial velocity field $v_0\in C^{2,\alpha}(\overline{\Omega})$ fulfills the following compatibility conditions
\beq
\left\{
\begin{array}{rlll}
\textnormal{curl}\,v_0&=&\omega_0 & \textnormal{in } \Omega,\\
\textnormal{div}\,v_0&=&0 & \textnormal{in } \Omega,\\
v_0\cdot\n&=&(l_0+r_0\times y)\cdot \n+\sum\limits_{1\leq j\leq m}w_j(0)\chi_j(y) & \textnormal{on } \partial\Omega,\\
\lim\limits_{|y|\to+\infty}v_0(y)&=&0.&
\end{array}
\right.
\eeq
Then we can find a time $T'\in(0,T]$ satisfying $CT'<1$, where \[C=C(\Vert \omega _0\Vert _ {C^{1, \alpha } (\overline{\Omega})} + \Vert \omega _0\Vert _{M^p_{1, \delta +2}} , \vert l_0\vert , \vert r_0\vert ,
\Vert w\Vert _{C^1([0,T])})\] is nondecreasing in all its arguments, 
and a solution $(v, {\bf q}, l, r)$ of \eqref{S1}-\eqref{S7} in the class
\beq\label{clas01}
v\in C([0,T'];C^{2,\alpha}(\overline{\Omega}) \cap M^p_{2,\delta + 1} ),\;\; \nabla v\in C([0,T'];L^4_{p(\delta + 2)}(\Omega)),
\eeq
\beq\label{clas02}
\nabla {\bf q}\in C([0,T'];L^2(\Omega)),
\eeq
\beq
\lim_{ |y| \to +\infty} \nabla {\bf q}(t,y)=0, \quad \forall t\in [0,T'],  
\eeq
\beq\label{clas03}
(l,r)\in C^1([0,T'];\R^3\times\R^3).
\eeq
Moreover, for $\Vert w\Vert _{ C^1 ( [0,T] ) } \le R$ ($R>0$ being any given constant), this solution satisfies 
\beq
\norm{(l,r)-(\overline{l},\overline{r})}_{L^\infty(0,T')}+\norm{v-\overline{v}}_{L^\infty(0,T';C^{2,\alpha}(\overline{\Omega})) }
\leq C' \Big(\norm{\omega_0}_{C^{1,\alpha}(\overline{\Omega})}+\norm{\omega_0}_{M^p_{1,\delta + 2}}\Big),
\label{aaa1}
\eeq
for some constant $C' > 0$, 
where $(\overline{l},\overline{r},\overline{v})$ is the potential solution of \eqref{S1}-\eqref{S7} associated with $l_0, r_0$,
$\{w_j\}_{1\leq j\leq m}$, and
$\overline{\omega}_0 = 0$.
\end{theorem}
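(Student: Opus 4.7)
The plan is to apply the Banach contraction mapping theorem in a ball centered at the potential solution $(\bar v,\bar l,\bar r)$ associated with $\omega_0 \equiv 0$. Setting
\[ X := C([0,T']; C^{2,\alpha}(\overline{\Omega})\cap M^p_{2,\delta+1}) \times C^1([0,T'];\R^3\times\R^3), \]
I look for a fixed point of a map $\Lambda$ on a closed ball $B_R \subset X$ whose radius is slightly larger than the size predicted by \eqref{aaa1}. The map is built in three ingredients: a vorticity transport step, a div--curl reconstruction of the velocity, and an integration of the ODE for $(l,r)$.

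Given $(\tilde v,\tilde l,\tilde r) \in B_R$, I first extend $\omega_0$ and the transport field $\tilde V := \tilde v-\tilde l-\tilde r\times y$ to $\R^3$ by $\pi$ (preserving the divergence-free character), integrate the characteristics $\partial_s\Phi^s=(\pi\tilde V)(s,\Phi^s)$ with $\Phi^0=\textrm{id}$, and transport $\omega_0$ by the 3D Cauchy formula, corrected to incorporate the Coriolis term $-r\times\omega$ present in the body-frame vorticity equation. Next, given $\omega(t)$ and a pair $(l(t),r(t))$, I reconstruct
\[ v = v^{\omega} + \sum_{i} l_i \nabla \phi_i + \sum_i r_i\nabla\varphi_i + \sum_j w_j\nabla\psi_j, \]
where $v^\omega$ is the Biot--Savart field solving $\textrm{div }v^\omega=0$, $\textrm{curl }v^\omega=\omega$ in $\R^3$ with $v^\omega\to 0$, so that the harmonic part carries the boundary condition \eqref{S3}. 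Finally, substituting this $v$ into \eqref{S5}--\eqref{S6} and integrating the pressure against the potentials $\phi_i,\varphi_i$ produces an ODE of the form $(l,r)'=\mathcal{J}^{-1}(Cw'+F(l,r,w)+G(\omega,v))$, where $G$ collects the vorticity contribution, and whose solution $(l,r)\in C^1([0,T'];\R^6)$ defines the last two components of $\Lambda(\tilde v,\tilde l,\tilde r)$.

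Stability of $B_R$ under $\Lambda$ and the contraction property for small $T'$ rest on three well-separated ingredients: (i) the classical Gronwall estimate for characteristics in $\R^3$, which gives Lipschitz dependence of $\Phi^t$ on $\tilde V$; (ii) Kikuchi's weighted elliptic estimates from \cite{kikuchi86}, which under $p\in(3,4]$ and $\delta\in[0,1-3/p)$ yield $\|v^\omega\|_{M^p_{2,\delta+1}} \leq C\,\|\omega\|_{M^p_{1,\delta+2}}$ and, combined with $\omega_0 \in M^p_{0,\delta+3}$ plus the pointwise bounds \eqref{Homega}--\eqref{zzz2}, also the $C^{2,\alpha}$ estimate on $v$; (iii) the local Lipschitz character of $F$ on $B_R$. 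Each step contributes a Lipschitz constant of order $T'$ after subtraction of two iterates, so choosing $T'$ with $CT'<1$ closes both self-mapping and contraction.

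The main obstacle is the preservation of the weighted norms of $\omega(t)$ uniformly in $t\in[0,T']$, because $\tilde V$ does not decay at infinity: the rigid-motion part $-\tilde l-\tilde r\times y$ grows linearly, so particles move over a macroscopic region during any positive time. This is the precise reason the decay condition $\kappa > 3+\delta+3/p$ and the bounds \eqref{Homega}--\eqref{zzz2} on $\omega_0$ are imposed; together they ensure that the weights $\langle y\rangle^{\delta+2}$ and $\langle y\rangle^{\delta+3}$ built into the norms $M^p_{1,\delta+2}$ and $M^p_{0,\delta+3}$ are propagated by the flow, and similarly for the H\"older estimate on $\nabla\omega$. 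Once the fixed point is produced, the comparison estimate \eqref{aaa1} is obtained by subtracting the fixed-point equations corresponding to $\omega_0$ and to $\omega_0\equiv 0$: the source of the difference is entirely of order $\|\omega_0\|_{C^{1,\alpha}}+\|\omega_0\|_{M^p_{1,\delta+2}}$, and a Gronwall argument on $[0,T']$ closes the bound with a constant $C'$ depending only on the data already present in the statement.
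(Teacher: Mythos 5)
Your overall scheme (contraction mapping built from a vorticity transport step, a div--curl reconstruction of the velocity, and an ODE for $(l,r)$) is the same as the paper's, but two steps as you have written them would not close.

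First, the velocity reconstruction. You propose to take $v^\omega$ as the Biot--Savart solution in $\R^3$ of $\textrm{curl}\,v^\omega=\omega$, $\textrm{div}\,v^\omega=0$, and to let the harmonic potentials ``carry the boundary condition''. But the potentials $\phi_i,\varphi_i,\psi_j$ have \emph{fixed} Neumann data $n_i$, $(y\times n)_i$, $\chi_j$ on $\partial\Omega$; they cannot absorb the extra normal trace $v^\omega\cdot n$ (which is generically nonzero for a whole-space Biot--Savart field), so the boundary condition \eqref{S3} is violated. One must instead solve the div--curl system directly in the exterior domain $\Omega$ with the homogeneous Neumann condition $\eta\cdot n=0$ (as in \eqref{eqEta_1}--\eqref{eqEta_4}), or equivalently add a harmonic gradient corrector with Neumann data $-v^\omega\cdot n$. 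This is exactly what Kikuchi's exterior div--curl result \cite[Proposition 3.1]{kikuchi86} provides and why it is invoked in the paper.

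Second, and more seriously, the contraction estimate cannot be closed in the space $X$ you choose. Your $X$ puts $v$ in $C^{2,\alpha}\cap M^p_{2,\delta+1}$, which is equivalent (through the div--curl elliptic estimates) to measuring $\omega=\textrm{curl}\,v$ in $C^{1,\alpha}\cap M^p_{1,\delta+2}$. When you subtract two iterates of the Lagrangian transport $\hat\omega^k = (G^k)^{-1}\,\hat\omega_0\circ\hat X^k$, the resulting difference involves $\hat\omega_0\circ\hat X^1-\hat\omega_0\circ\hat X^2$ and $\nabla\hat\omega_0\circ\hat X^1-\nabla\hat\omega_0\circ\hat X^2$. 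Estimating the $C^{1,\alpha}$ norm of this difference in terms of $\|X^1-X^2\|$ would require control of $\partial^2\hat\omega_0$ in $C^{0,\alpha}$, i.e.\ it forces $\omega_0\in C^{2,\alpha}$, which is not assumed. This loss of one derivative is the standard obstruction in Euler existence proofs, and the paper resolves it by performing the fixed point in a \emph{weaker} topology: the complete metric on $\mathcal{C}$ is the norm $\interleave\cdot\interleave$ of \eqref{norme}, which measures $\omega$ only in $C^{0,\alpha}\cap L^p_{p(\delta+2)}$ (one derivative and one weight down from the membership class), while the ball $\mathcal{F}$ retains the full $C^{1,\alpha}\cap M^p_{1,\delta+2}$ bound for self-mapping. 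Your sketch simply asserts that ``each step contributes a Lipschitz constant of order $T'$'' without identifying this mismatch; as stated the argument does not yield a contraction. A third, smaller point: the Cauchy formula with the Jacobian of the flow of $\tilde v=\hat v-l-r\times y$ already incorporates the $-S(r)$ contribution (because $\partial_y\tilde v = \partial_y\hat v - S(r)$, see \eqref{equation for G}), so no additional ``Coriolis correction'' of the vorticity transport is needed and adding one would give the wrong stretching term.
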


\begin{remark}
\begin{enumerate}
\item
Note that, using the mean-value theorem, the assumption \eqref{zzz1} implies \eqref{Homega} for $\kappa=1$, while $\omega _0\in M^p_{1, \delta +2}$ yields $\vert \omega _0(y)\vert \le O( |y| ^{-\delta -2} ) $ as $\vert y\vert \to +\infty$ by \cite[Lemma 2.2]{kikuchi86}.
\item Note that the fluid-structure system considered here is more complicated than those 
considered in \cite{RR2008, WZ}, for we have added a control input in the boundary condition \eqref{S3}. Moreover, we 
require the solution to be continuous with respect to the control input in order to  apply a perturbative argument at the end of the proof of Theorem \ref{thm1}. 
Inspired by the method developed in \cite{GR}, it is quite natural
to work in Kikuchi's spaces. Here, we shall prove the existence, uniqueness
and continuous dependence of the solution  with respect to the control input in {\em one step}, by using the contraction mapping 
theorem.  
\end{enumerate}
\end{remark}
Theorem \ref{theoreme1} will be established by using the contraction mapping principle (i.e. the Banach fixed-point theorem). We first define an operator whose fixed-points will give local-in-time solutions.

\subsection{The operator}
Let $p \in (3,4]$, $\delta \in [0, 1-\frac{3}{p} )$, $\alpha\in ( 0,1-\frac{3}{p}]$,  and $T>0$. We fix a control input $w\in C^1([0,T]; \R ^m)$. 
For $N>0$ and $P>0$ given, we introduce the set 
\ba
\label{a1}
{\mathcal F } &:=&\D\Big\{ (l,r,\omega) \in \R ^3 \times \R ^3 \times \big( C^{1,\alpha}(\overline{\Omega})\cap M^p_{1,\delta+ 2 } \big) ;\ 
\D  \vert l-l_0 \vert  +\vert  r-r_0\vert  \leq N, \nonumber\\
&&\qquad \|\omega\|_{C^{1,\alpha}(\overline{\Omega})} + \| \omega \|_{M^p_{1, \delta + 2}  }\leq P, \ \D\textnormal{div}\,\omega=0,\;\;\;  \int_{\partial\Omega}\omega\cdot\n \, d\sigma =0
\Big\} .
\ea
%Let $C_0( \overline{\Omega } )$ denote the space of functions $\omega \in C(\overline{\Omega}, \R^3)$ with $\lim_{\Vert y\Vert \to  + \infty} \omega (y) =0$, 
%which is a Banach space when endowed with the norm $\Vert \cdot \Vert _{L^\infty (\Omega )}$.  
Then, using Arzela-Ascoli theorem for the restrictions to closed ball centered at the origin of partial derivatives of order one, 
 it is easy to see that ${\mathcal F}$ is a {\em closed} subset of the Banach space 
$E: = \R ^3 \times \R ^3 \times  \big( C^{0,\alpha}  (\overline{\Omega} ) \cap M^p_{0,\delta + 2} \big) $ endowed with the norm
\[
\Vert (l,r , \omega ) \Vert _E : = \vert l\vert + \vert r\vert + \Vert \omega \Vert_ {C^{0,\alpha} (\overline{ \Omega } ) }  + \Vert \omega \Vert _{L^p_{p(\delta + 2) } (\Omega )}\cdot 
\]   
It follows at once that for any $T'>0$, the set
 \[ 
  {\mathcal C} := \{  (l,r,\omega) \in C([0,T'],{\mathcal F}); \ (l(0),r(0),\omega (0) )=(l_0,r_0,\omega _0) \}  
  \]
   is a {\em closed} subset of the Banach space $C([0,T']; E)$ endowed with the norm
$\sup_{t\in [0,T']}  \Vert ( l(t) , r(t) , \omega ( t ) ) \Vert _E $,  which is also complete for the equivalent norm 
\be
\label{norme}
\interleave (l,r,\omega )  \interleave := \Vert l\Vert _{L^\infty (0,T')} + \Vert r\Vert _{L^\infty (0,T')} + \Vert \omega \Vert _{ L^\infty ( 0,T'; C^{0, \alpha }(\overline{\Omega} )   )} 
+ \Vert \omega \Vert _{ L^\infty (0,T'; L^p_{ p( \delta + 2) } (\Omega ) ) } .
\ee
Therefore, $ {\mathcal C}$  is {\em complete} for the distance associated with the norm $\interleave \cdot \interleave$.

Here, we pick
\beq 
\label{a2}
P =e ^e \cdot  ( C_6  \| \pi( \omega_0 ) \|_{C^{1,\alpha}(\R^3)} + C_7 \| \pi ( \omega _0 )\| _{  M^p_{1, \delta +2 } (\R ^3 )  } ) ,
\eeq
where $C_6$ and $C_7$ are some universal constants arising in the computations below and that we do not intend to give explicitly, and
$\Vert \cdot \Vert _{C^{1,\alpha} (\R ^3)}$ and $\Vert \cdot \Vert _{ M^p_{1, \delta +2} (\R ^3) }$ are defined as   
$\Vert \cdot \Vert _{C^{1,\alpha} (\overline{\Omega} )}$ and $\Vert \cdot \Vert _{ M^p_{1, \delta +2} }$, respectively. 
%Here $\hat{\omega}_0$ stands for the extension $\pi(\omega_0)$ of $\omega_0$.

Let us now define the operator \T on $\mathcal{C}$: to any $(l, r, \omega) \in \mathcal{C}$, we associate
\beq
 \textnormal{\T} ( l , r , \omega ) := ( \hat{l} , \hat{r} , \hat{\omega} ) ,
\eeq
as follows. First, we introduce the ``fluid velocity" 
\beq \label{eq_velo}
v=\eta+\sum\limits_{i=1}^3l_i\nabla \phi_i+\sum\limits_{i=1}^3r_i\nabla \varphi_i+\sum\limits_{1\leq j\leq m}w_j(t)\nabla\psi_j,
\eeq
where $\eta$ is  the solution to the div-curl system (see e.g. \cite[Proposition 3.1]{kikuchi86})
%%%%%%%%%%%%%%%%%%%%%%%%%%%%%%%%%%%%%%%%%%%%%%%%%%%
\ba
\label{eqEta_1}
\textnormal{curl}\,\eta&=&\omega,  \quad (t,y)\in (0,T)\times\Omega, \\
\textnormal{div}\,\eta &=&0, \quad    (t,y) \in (0,T)\times\Omega, \\
\D \eta \cdot \n &=& 0, \quad    (t,y) \in (0,T) \times\partial\Omega,\\
\label{eqEta_4}
\lim\limits_{|y|\to+\infty}\eta(t,y) &=& 0, \quad  t\in (0,T), %\\
%\label{eqEta_5}
%\eta(0,y)=\eta_0(y) &:=& v_0(y)-\sum\limits_{i=1}^3l_i(0)\nabla \phi_i (y) -\sum\limits_{i=1}^3r_i(0)\nabla \varphi_i (y) -\sum\limits_{1\leq j\leq m}w_j(0)\nabla\psi_j (y). 
\ea

Next, we extend the velocity field and the initial vorticity by setting
\beq
\hat{v}(t,\cdot):=\pi[v(t,\cdot)],
\eeq
\beq
\hat{\omega}_0:=\pi[\omega_0].
\eeq

The flow $\hat{X}$ associated with $\tilde{v} : =\hat{v} - l - r\times y$ is defined as the solution to the Cauchy problem
\beq\label{flux of v tilde}
\left\{
\begin{array}{ccl}
\D\frac{\partial}{\partial s} \hat{X}(s;t,y)&=&\tilde{v} ( s, \hat{X} (s;t,y)) =  \hat{v}(s,\hat{X}(s;t,y))-l(s)-r(s)\times\hat{X}(s;t,y),\\\\
\hat{X}(t;t,y)&=& y.
\end{array}\right.
\eeq
The fact that $\hat{X}$ is defined globally on $[0,T']^2 \times \R ^3$ follows from the boundedness of $\hat{v}$ (see  below \eqref{cota_v}).
 
 We denote by $G$ the Jacobi matrix of $\hat{X}$, namely
 \beq\label{defG}
 G(s;t,y)=\frac{\partial\hat{X}}{\partial y}(s;t,y).
 \eeq
Differentiating in  \eqref{flux of v tilde} with respect to $y_{j}$ ($j=1,2,3$), we see that $G(s;t,y)$ satisfies the following system:
\ba
\frac{\partial G}{\partial s} 
&=& \D\frac{\partial\hat {v}}{\partial y}(s,\hat{X}(s ; t, y))\, G(s ; t,y) -r(s) \times G(s ; t, y),  \nonumber  \\
&=& \left( \D\frac{\partial\hat {v}}{\partial y}(s,\hat{X}(s ; t,y)) -S(r(s)) \right) G(s ; t, y),  \label{equation for G} \\
G(t; t,y)&=& Id\; \textrm{(the identity matrix)}.\nonumber 
\ea
We infer from  
\beq\label{div v tilde}
\textrm{div}(\tilde{v})=0 
\eeq
 %and \eqref{equation for G} 
 that 
\beq\label{det G}
\textrm{det } G(s ;  t, y)=1.
\eeq
We now define
\beq\label{equation rot01}
\hat{\omega}(t,y) : =G^{-1}(0 ; t, y)\hat{\omega}_0(\hat{X}(0 ; t,y)).
\eeq
Finally, in order to define the pair $(\hat{l}, \hat{r})$, we introduce the function $\mu : [0,T']\times \Omega  \rightarrow\R$ (defined up to a function of $t$) which
solves 
\be\left\{\begin{array}{rrll}
-\Delta \mu&=&\D\textnormal{tr\,}\Big(\nabla v\cdot \nabla v\Big),&\textnormal{ in }(0,T)\times\Omega, \\
\D\frac{\partial \mu}{\partial n}&=&\D-\!\!\!\sum\limits_{1\leq j\leq m}\dot{w}_j(t)\chi_j(y)
-\left(\Big((v-l-r\times y)\cdot\nabla\Big)v+r\times v\right)\cdot \n,&\textnormal{on }(0,T)\times\partial\Omega ,\\ \\
\lim\limits_{\vert y\vert \to + \infty}\nabla\mu(t,y) &=&0 & \textnormal{in }(0,T).
\end{array}\right.
\label{system_mu}
\eeq
Note that  $\nabla \mu \in L^2(\Omega)$ if $\nabla v\in L^4(\Omega )\cap C^1(\overline{\Omega})$, and that, by Schauder estimates,  
$\mu \in C^{2,\alpha}_{loc}(\overline{\Omega })$  if in addition $v\in C^{2, \alpha}(\overline{\Omega})$. 
Then we define $\hat{l}$ and $\hat{r}$ by
\beq\label{sistema_l_r}
\left(\begin{array}{c}
\hat{l}(t) \\
\hat{r}(t)
\end{array}\right)
: =\left(\begin{array}{c}
l_0 \\
r_0
\end{array}\right)
+
\mathcal{J}^{-1}\int_0^t\left\{
\left(\begin{array}{c}
\Big(\int\limits_{\Omega} \nabla \mu(\tau,y)\cdot \nabla \phi_i(y) \,d y\Big)_{i=1,2,3}
\\
\Big(\int\limits_{\Omega} \nabla \mu(\tau,y)\cdot \nabla \varphi_i(y) \,d y\Big)_{i=1,2,3}
\end{array}\right) -
\left(\begin{array}{c}
m_0r\times l
\\
r\times J_0r
\end{array}\right)
\right\}d\tau .
\eeq
This completes the definition of {\LARGE$\tau$}.
%by the solutions of \eqref{equation rot01} and \eqref{sistema_l_r}.

\subsection{Fixed-point argument and local-in-time existence}

Our first step consists in proving the following result.

\begin{theorem}\label{theoreme2.2}  Let $N>0$ and $P>0$ be given. Then there exists some time $T' > 0$ such that \T is a contraction in  
$\mathcal{C}$ for the distance associated with $\interleave \cdot \interleave$. Thus, \T has a unique
fixed-point in $\mathcal C$. 
\end{theorem}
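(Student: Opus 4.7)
The plan is to choose $T'>0$ sufficiently small so that the operator $\T$ both sends $\mathcal{C}$ into itself and is a strict contraction on $\mathcal{C}$ for the distance associated with $\interleave\cdot\interleave$. The Banach fixed-point theorem then yields the unique fixed point. Throughout, $C$ denotes a generic constant depending on $N$, $P$, $\|w\|_{C^1([0,T])}$, $l_0$, $r_0$ and the geometry, but not on $T'$.

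\textbf{Self-map.} Given $(l,r,\omega)\in\mathcal{C}$, I first bound the velocity field $v$ defined by \eqref{eq_velo}. Kikuchi's theory for the div-curl system \eqref{eqEta_1}--\eqref{eqEta_4} controls $\|\eta\|_{C^{2,\alpha}(\overline{\Omega})}+\|\eta\|_{M^p_{2,\delta+1}}$ by $\|\omega\|_{C^{1,\alpha}(\overline{\Omega})}+\|\omega\|_{M^p_{1,\delta+2}}\le P$, and the potential contributions from $\nabla\phi_i,\nabla\varphi_i,\nabla\psi_j$ in \eqref{eq_velo} are bounded using $|l|+|r|\le |l_0|+|r_0|+N$, $\|w\|_{C^1}$, and the decay estimate \eqref{estimation1}. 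This gives
\[
\|v(t,\cdot)\|_{C^{2,\alpha}(\overline{\Omega})}+\|v(t,\cdot)\|_{M^p_{2,\delta+1}}\le C,\qquad t\in[0,T'],
\]
hence a uniform $W^{1,\infty}(\R^3)$ bound on the extension $\hat v=\pi[v]$. The flow $\hat X(s;t,\cdot)$ is then globally defined on $[0,T']^2\times\R^3$ and a short-time Gronwall argument applied to \eqref{equation for G} produces $\|G(s;t,\cdot)-\textnormal{Id}\|_{L^\infty}+\|G^{-1}(s;t,\cdot)-\textnormal{Id}\|_{L^\infty}\le CT'$ as well as $|\hat X(s;t,y)-y|\le CT'\langle y\rangle$.

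\textbf{Bounds on $\hat\omega$, $\hat l$, $\hat r$.} The Lagrangian formula \eqref{equation rot01} together with \eqref{det G} ensures $\textnormal{div}\,\hat\omega=0$ and $\int_{\partial\Omega}\hat\omega\cdot\n\,d\sigma=0$ automatically. For the $C^{1,\alpha}$-norm I differentiate \eqref{equation rot01} in $y$: the matrix $G^{-1}(0;t,\cdot)$ contributes factors growing like $\exp(CT'\|\partial_y\hat v\|_{L^\infty})$, and the assumptions \eqref{Homega}--\eqref{zzz2} control the H\"older norms of $\hat\omega_0\circ\hat X(0;t,\cdot)$; this is exactly why the constant $e^e$ appears in the choice \eqref{a2}. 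For the weighted bound I use $|\hat X(s;t,y)-y|\le CT'\langle y\rangle$ to show that $\langle\hat X(0;t,y)\rangle\sim\langle y\rangle$ uniformly, and invoke \cite[Lemma 2.2]{kikuchi86} to translate pointwise decay into the $M^p_{1,\delta+2}$ norm. Choosing $T'$ small yields $\|\hat\omega(t)\|_{C^{1,\alpha}}+\|\hat\omega(t)\|_{M^p_{1,\delta+2}}\le P$. For $(\hat l,\hat r)$, elliptic estimates for \eqref{system_mu} give $\|\nabla\mu\|_{L^2(\Omega)}\le C$ via the $L^4\cap C^1$ bound on $\nabla v$; then \eqref{sistema_l_r} yields $|\hat l(t)-l_0|+|\hat r(t)-r_0|\le CT'\le N$ for $T'$ small. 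Continuity in $t$ of all the quantities is a standard consequence of the formulae.

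\textbf{Contraction and main obstacle.} For two data $(l^i,r^i,\omega^i)\in\mathcal{C}$ ($i=1,2$) with images $(\hat l^i,\hat r^i,\hat\omega^i)$, the difference $v^1-v^2$ is controlled in $L^\infty(0,T';C^{1,\alpha}\cap L^p_{p(\delta+1)})$ by $\interleave(l^1-l^2,r^1-r^2,\omega^1-\omega^2)\interleave$ via linearity of the div-curl resolvent and of \eqref{eq_velo}. Gronwall in \eqref{flux of v tilde}--\eqref{equation for G} then gives $\|\hat X^1-\hat X^2\|+\|G^1-G^2\|\le CT'\interleave\cdot\interleave$, and the formula \eqref{equation rot01} transfers this to $\|\hat\omega^1-\hat\omega^2\|_{C^{0,\alpha}\cap L^p_{p(\delta+2)}}\le CT'\interleave\cdot\interleave$; note that one derivative is lost in this comparison, which is precisely why the norm $\interleave\cdot\interleave$ only measures H\"older exponent $\alpha$ and weight $\delta+2$ at order zero even though $\mathcal{C}$ encodes a $C^{1,\alpha}\cap M^p_{1,\delta+2}$ bound. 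Finally \eqref{sistema_l_r} with the elliptic estimate on $\mu^1-\mu^2$ yields $\|\hat l^1-\hat l^2\|_{L^\infty}+\|\hat r^1-\hat r^2\|_{L^\infty}\le CT'\interleave\cdot\interleave$. Taking $T'$ so that $CT'<1/2$ closes the contraction. The main obstacle is the weighted-space estimate on $\hat\omega$: because the flow is close to the identity only in the relative sense $|\hat X-y|\le CT'\langle y\rangle$, one must carefully show that the weight $\langle\hat X(0;t,y)\rangle^{\delta+2}$ is equivalent to $\langle y\rangle^{\delta+2}$ uniformly in $y\in\R^3$, which is what forces the quantitative smallness of $T'$ and hence the structure of the constant in \eqref{a2}.
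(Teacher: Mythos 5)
Your proof is structurally the same as the paper's (Steps 1--11 of the proof of Theorem~\ref{theoreme2.2}): bound $v$ and its extension $\hat v$, propagate these bounds through the flow $\hat X$ and the Jacobian $G,G^{-1}$, transfer to $\hat\omega$ via \eqref{equation rot01} and to $(\hat l,\hat r)$ via the elliptic problem \eqref{system_mu}, and then repeat the comparison for two data to obtain the contraction in the weaker norm $\interleave\cdot\interleave$. Your observation that the contraction loses a derivative and a weight, which is why $\interleave\cdot\interleave$ sits one order below the norms defining $\mathcal F$, is exactly the paper's trick.

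Two of your attributions are off, though, and one of them hides where most of the paper's work actually goes. First, the factor $e^e$ in \eqref{a2} is not produced by the assumptions \eqref{Homega}--\eqref{zzz2}; it comes from the double-exponential Gronwall bound $\exp(C T'\mathcal{N} e^{CT'\mathcal{N}})\le e^e$ valid once $T'\le 1/(C_8\mathcal{N})$, applied to $G$ and $G^{-1}$ in both $C^{1,\alpha}(\R^3)$ and $M^p_{1,\delta+2}(\R^3)$. The self-map bound on $\hat\omega$ only uses $\hat\omega_0\in C^{1,\alpha}\cap M^p_{1,\delta+2}$ together with the composition estimates \eqref{X21}--\eqref{X23}. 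Second, the refined decay assumptions \eqref{Homega}--\eqref{zzz2} are precisely what make the \emph{contraction} estimate work, not the self-map bound: since $\hat X^1-\hat X^2\sim T'\langle y\rangle$ (and so is not bounded), one must measure $\langle y\rangle^{-1}(\hat X^1-\hat X^2)$ in $L^\infty$ and $C^{0,\alpha}$ and then pay for the factor $\langle y\rangle$ with the decay of $\omega_0$ and $\partial\omega_0/\partial y$; this is the content of the paper's Steps 7--9, and it is where \eqref{Homega}, \eqref{zzz1}, \eqref{zzz2} and the extra $M^p_{0,\delta+3}$ integrability of $\omega_0$ are consumed. Your sketch writes ``$\|\hat X^1-\hat X^2\|+\|G^1-G^2\|\le CT'\interleave\cdot\interleave$'' without a norm; if read in $L^\infty$ or $C^{0,\alpha}$ this is false, and the correct statement requires the $\langle y\rangle^{-1}$ weight that you invoke in the self-map paragraph but drop here. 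These are repairable, but the separation of roles between the hypotheses is the delicate point of the argument and should be stated correctly.
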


%%%%%%%%%%%%%%%%%%%%%%%%%%%%%%%%%
\begin{proof}[Proof of Theorem \ref{theoreme2.2}:] Set
$$\overline{N}:=N+|l_0|+|r_0|.$$
In the sequel, the various positive constants $C_i$ will depend on the geometry, on $\mathcal{J}$ and on the size of the controls $\|w_i\|_{C^1}$ only (hence, possibly also on $\pi$, but not on 
$T$, $l_0$, $r_0$, $\omega_0$, $N$, etc.).\\

{\bf Step 1.}  Let    $(l,r,\omega)\in\mathcal{C}$, and let $v$ be defined by  \eqref{eq_velo}.
It follows from \cite[Proposition 2.11]{kikuchi86} that for any $t\in [0,T']$, system \eqref{eqEta_1}-\eqref{eqEta_4} has a unique solution $\eta (t) \in M^p_{2,\delta + 1}$, and that 
\[
\Vert \eta (t) \Vert _{M^p_{2, \delta + 1}} \le C \Vert \omega (t) \Vert _{M^p_{1,\delta + 2}}.
\]
On the other hand, using \eqref{estimation1} and the fact that $0\le \delta <1-\frac{3}{p}$, it is easy to see that $\nabla\phi _i, \nabla \varphi _i, \nabla \psi _j\in M^p_{2, \delta +1}$
for $i=1,2,3$ and $j=1,2, ..., m$.  
It follows that 
\beq\label{eqCota01}
\norm{v(t)}_{M^p _{2, \delta + 1} } \leq C(  \Vert \omega (t) \Vert _{M^p_{1, \delta + 2} }  + \vert l(t) \vert + \vert r(t)\vert + \vert w(t) \vert  )  .
\eeq
Thus $v\in C( [0,T'] ; M^p_{2, \delta +1} )$ with 
\be
\label{X0} 
\Vert v\Vert _{L^\infty (0,T'; M^p_{2, \delta + 1})} \le C ( \overline{N} + P+ 1). 
\ee
Set 
\[
\N  : = \overline{N} + P +1 = N  +  | l_0 | + | r_0 | + P + 1. 
\]
Now Schauder estimates combined with the embedding $M^p_{2, \delta + 1} \subset C^1_b (\overline{\Omega})$ (see  \cite[Lemma 2.2]{kikuchi86})
 give that 
\begin{eqnarray*}
\norm{v (t) }_{C^{2,\alpha}(\overline{\Omega})} &\leq & C \left( \norm{\omega (t) }_{C^{1,\alpha}(\overline{\Omega})}+\norm{v (t) }_{C^{0,\alpha} (\overline{\Omega} ) }
 + \vert l (t) \vert +\vert r (t) \vert +\vert w (t) \vert  \right) \\
 &\leq  &  C \left( \norm{\omega (t) }_{C^{1,\alpha}(\overline{\Omega})}+\norm{v (t) }_{ M^p_{2, \delta + 1} }
 + \vert l (t) \vert  +\vert r (t) \vert  +\vert w (t) \vert  \right) \\
  &\leq &  C \left( \norm{\omega (t) }_{C^{1,\alpha}(\overline{\Omega})}+\norm{\omega (t) }_{M^p_{1, \delta + 2}}
 + \vert l (t) \vert  +\vert r (t) \vert  +\vert w (t) \vert  \right) .\\
\end{eqnarray*}
Thus $v\in C([0,T']; C^{2,\alpha }( \overline{\Omega} ) )$ with 
\ba
\norm{v}_{L^\infty(0,T';C^{2,\alpha}(\overline{\Omega}))} &\leq &  C \left(\norm{\omega}_{L^\infty(0,T';C^{1,\alpha}(\overline{\Omega}))}
+\norm{\omega }_{L^\infty(0,T'; M^p_{1, \delta + 2}) }  +\right.\nonumber\\
&&\quad  \left.
 + \norm{l}_{L^\infty(0,T')}+\norm{r}_{L^\infty(0,T')}+\norm{w}_{L^\infty(0,T')}\right).
\ea
		
Therefore, using the continuity of $\pi$,  we obtain that  
\beq\label{cota_v}
\norm{\hat{v}}_{L^\infty(0,T';C^{2,\alpha}(\R^3) ) }\leq\norm{\pi}C \N\leq C_1\N ,
\eeq
where $\norm{\pi}$ denotes the norm of $\pi$ as an operator in $\mathcal{L}(C^{2,\alpha}(\overline{\Omega}),C^{2,\alpha}(\R^3)).$\\

{\bf Step 2.} Let us turn our attention to $\hat X$ and $\hat \omega$. Taking the scalar product of each term of the first equation in  \eqref{flux of v tilde} by $\hat X$ results in 
\[
\vert \hat X\vert \frac{\partial \vert \hat X \vert  }{\partial s} =\frac{\partial}{\partial s} (\frac{1}{2} \vert \hat X\vert ^2) 
= \hat X \cdot  \frac{\partial \hat X}{\partial s } = (\hat v (s, \hat X) -l(s))\cdot \hat X \le \left( \vert \hat v (s, \hat X(s))  \vert + \vert l(s) \vert \right) \vert \hat X  \vert .  
\]
Simplifying by $\vert \hat X \vert $ and  using the second equation in  \eqref{flux of v tilde}, we obtain 
\be
\label{X1}
\left\vert \vert \hat X(s; t,y) \vert - \vert y\vert \right\vert \le  CT' \N . 
\ee
Thus $\hat X(s,t; \cdot) \not\in L^\infty(\R ^3)$ for all $(s,t)\in [0,T'] ^2$. 

It follows from \cite[Lemma 2.2]{kikuchi86} that any function $u\in M^p_{1, \delta +2}$ satisfies $\vert u(x) \vert = O(\vert x\vert ^{-\delta -1} )$ as $\vert x\vert \to  + \infty$, and
from \cite[Lemma 2.3]{kikuchi86} that 
$M^p_{1, \delta +2}$ is an algebra. Let $M^p_{1, \delta +2} (\R ^3) $ be defined as $M^p_{1, \delta +2}$ but with functions from $\R^3$ to $\R$.
We introduce the space
\[ V : =M^p_{1, \delta +2}  (\R ^3 ) ^{3\times 3} \oplus \R ^{3\times 3}\]
 with norm 
 \[
 \Vert G\Vert _V : = \Vert G_1\Vert _{M^p_{1, \delta +2} (\R ^3) } + \Vert G_2\Vert _{\R ^{3\times 3}}
 \]
 if $G=G_1+G_2$ with  $G_1 \in  M^p_{1, \delta + 2} (\R ^3 ) ^{3\times 3}$ and $G_2\in \R ^{3\times 3}$. (Note that $G_2$ is uniquely determined by $G$, as it is nothing but the $3\times 3$ matrix of the limits at  infinity of the entries of $G$.) Then it is easy to see that $V$ is a Banach space and an algebra.

Let us check that $(\partial \hat v/\partial y) (s, \hat X (s;t,y) ) \in L^\infty ((0,T')^2, V)$.   From  \eqref{X1} we have that
\[
\left\vert  \hat X(s;t,y)  \right\vert  \le \left\vert  y\right\vert   + CT' \N
\]
and proceeding as in \cite[p. 587-588]{kikuchi86}, we infer that for any $u\in M^p_{1, \delta + 2} (\R ^3) $
\be
\label{Y1}
\Vert  u(\hat  X(s; t, .)) \Vert _{M^p_{0, \delta + 2}  (\R ^3) } \le C (1+ [ C T' \N ] ^{ p(\delta + 2) } ) \Vert u \Vert _{M^p_{0, \delta + 2} (\R ^3) } .
\ee
On the other hand, using Gronwall's lemma in \eqref{equation for G}, we obtain with  \eqref{cota_v} that 
\be
\label{X2} 
\Vert G(s; t, .) \Vert _{L^\infty (\Omega )  }  \le C\exp (CT' \N). 
\ee
Since 
\[
\frac{\partial [u(\hat X(s;t,y)]}{\partial y_j} = \sum_{k=1}^3 \frac{\partial u}{\partial x_k} ( \hat X (s;t,y) ) \frac{\partial \hat X_k}{\partial y_j} (s;t,y),
\]
using \eqref{Y1} and \eqref{X2} for $\partial u/\partial y$ (with $\delta +3$ substituted to $\delta +2$), we arrive to 
\be
\label{Y2}
\Vert  \frac{\partial }{\partial y} [u(\hat  X(s; t,y)) ] \Vert _{M^p_{0, \delta + 3} (\R ^3) } 
\le C\exp (CT' \N)  \Vert \frac{\partial u}{\partial y} \Vert _{M^p_{0, \delta + 3} (\R ^3) } .
\ee
We infer from \eqref{X0} and \eqref{cota_v} that 
\[
\Vert \frac{\partial \hat v}{\partial y} \Vert _{L^\infty (0,T', M^p_{1, \delta +2} (\R ^3  )) } \le C \N.
\]
It follows that  $\frac{\partial \hat v}{\partial y} (s, \hat X(s; t,y)) -S(r(s)) \in L^\infty ( (0,T')^2, V)$ with 
 \be
 \Vert \frac{\partial \hat v}{\partial y} (s, \hat X(s; t,y)) -S(r(s)) \Vert _{V} \le C \N \exp (CT' \N ). 
 \ee
   
Solving  the linear Cauchy problem \eqref{equation for G} in the Banach algebra $V$, we see that $G\in C([0,T']^2 ; V)$ and (with Gronwall's lemma) that
\beq
\label{X5}
\norm{G}_{L^\infty((0,T')^2;V)} \leq  C_2\exp( C_2T' \N e^{C_2T' \N} ) .
\eeq
By \eqref{det G}, each entry of $G^{-1}$ is a cofactor of $G$, so that we infer that 
\beq
\label{X5bis}
\norm{G ^{-1}}_{L^\infty ( (0,T')^2;V)} \leq  C_3 \exp(C_3T' \N  e^{C_3T'\N } ).
\eeq

If $f\in C^{1} ( \R ^3,\R ^3  )$ with $\partial f/\partial y\in C^{0,\alpha} (\R ^3, \R ^3)$ and 
$g\in C^{1,\alpha } (\R ^3, \R ^3)$, then 
\ba
\vert g\circ f \vert _{0, \alpha }&\le & C\vert g\vert _{0,\alpha } \Vert \frac{\partial f }{\partial y}\Vert ^\alpha _{L^\infty } , \label{X21} \\  
\Vert \frac{\partial }{\partial y} (g\circ f) \Vert _{L^\infty } &\le& C \Vert \frac{\partial g}{\partial y} \Vert _{L^\infty } \Vert \frac{\partial f}{\partial y}  \Vert _{L^\infty } , \label{X22}\\
\vert \frac{\partial }{\partial y} (g \circ f ) \vert _{0, \alpha } &\le & C 
\left( \Vert \frac{\partial g}{\partial y} \Vert _{L^\infty}\,  \vert \frac{\partial f}{\partial y} \vert _{ 0, \alpha }   
+ \Vert \frac{\partial f}{\partial y} \Vert ^{1+ \alpha}  _{L^\infty}  \, \vert \frac {\partial g}{\partial y} \vert _{ 0 , \alpha }    \right) .\label{X23} 
\ea

Using \eqref{defG}, \eqref{equation for G},  \eqref{cota_v},   \eqref{X2}, \eqref {X21} and Gronwall's lemma, we obtain that 
\be
\Vert G\Vert _{ L^\infty ((0,T')^2; C^{0,\alpha } (\R ^3 ) ) } \le C \exp (CT' \N  e^{CT'\N} ). \label{X24}
\ee

Next, it follows from \eqref{equation for G}, \eqref{cota_v}, \eqref{X22}, \eqref{X23} and \eqref{X24} that 
\be
\Vert G\Vert _{ L^\infty ((0,T')^2; C^{1,\alpha } (\R ^3 ) ) } \le C_4  \exp (C_4T' \N e^{C_4T'\N} ). \label{X25}
\ee
 
Using again \eqref{det G}, we obtain that
\be
\norm{G ^{-1}}_{L^\infty( (0,T')^2; C^{1, \alpha} (\R ^ 3) )} \leq   C_5 \exp(C_5T' \N e^{C_5 T'\N } ). \label{X26}
\ee

We are in a position to derive the required estimates for $\hat \omega$. 
From \eqref{X21}-\eqref{X23} and \eqref{X25}, we infer that 
\[
\Vert \hat \omega _0( \hat X(0;t,.)) \Vert  _{L^\infty( 0,T'; C^{1, \alpha} (\R ^3 ) )} \leq   C \exp(CT' \N e^{C T'\N } )\Vert \hat \omega _0\Vert _{C^{1,\alpha } (\R ^3)} 
\]
which yields with  \eqref{equation rot01} and \eqref{X26}
\be
\Vert \hat \omega  \Vert  _{L^\infty( 0,T'; C^{1, \alpha} (\R ^3 ) )} 
\leq   C_6 \exp(C_6T' \N e^{C_6 T'\N } )\Vert \hat \omega _0\Vert _{C^{1,\alpha } (\R ^3)}. 
\ee

From \eqref{Y1}-\eqref{Y2} we obtain 
\[
\Vert \hat \omega _0( \hat X(0;t,.))  \Vert  _{L^\infty( 0,T'; M^p_{1,\delta +2} (\R ^3) ) } 
\leq   C \exp ( C T' \N ) \Vert \hat \omega _0\Vert _{M^p_ {1, \delta +2} (\R ^3)  } 
\]
which gives with \eqref{X5bis}
\be
\Vert \hat \omega  \Vert  _{ L^\infty( 0,T'; M^p_{1,\delta +2} (\R ^3) )} 
\leq   C_7 \exp(C_7T' \N e^{C_7 T'\N } )\
 \Vert \hat \omega _0\Vert _{ M^p_{1, \delta +2} (\R ^3)  } .
\ee
  
 If we define $C_8 =\max\{ C_6,C_7 \}$, and take $T' >0$ such that
 \beq \label{cond01}
 T'\leq\frac{1}{C_8\N } ,
 \eeq 
then we obtain 
 \ba
 \|\hat{\omega}\|_{L^\infty(0,T';C^{1,\alpha}(\R^3))} +\|\hat{\omega}\|_{L^\infty(0,T'; M^p_{1, \delta +2 } (\R ^3 )  ) }   
   &\leq&e ^e\cdot  ( C_6  \|\hat{\omega}_0\|_{C^{1,\alpha}(\R^3)} + C_7 \|\hat{\omega}_0\| _{  M^p_{1, \delta +2 } (\R ^3 ) } ) \nonumber \\
  &=:&P.
 \ea
On the other hand, if we consider any scalar function $\varphi\in C^1(\R^3)$ with compact support,  we obtain by using the change of variables $y=\hat X(t;0,x)$
\begin{eqnarray*}
\int_{\R^3}\hat{\omega}(t,y) \cdot  \nabla  \varphi(y)dy
&=& \int_{\R ^3} G^{-1} (0; t,y) \hat \omega _0 (\hat X (0 ; t , y) )   \cdot  \nabla  \varphi (y) dy\\
& =&\D \int_{\R^3}G^{-1}(0;t,\hat X(t;0,x))\hat{\omega}_0(x) \cdot  \nabla_y \varphi(\hat X(t;0,x))dx,\\
&=&\D \int_{\R^3}G(t;0,x)\hat{\omega}_0(x) \cdot  \nabla_y \varphi(\hat X(t;0,x))dx\\
&=&\D \int_{\R^3}\hat{\omega}_0(x) \cdot  \nabla_x [\varphi(\hat X(t;0,x))]dx.
\end{eqnarray*}
Since, by \eqref{X1},
the function $\varphi(\hat X(t;0,\cdot))\in C^1(\R^3)$ has a compact support, we infer from  $\textnormal{div}(\hat\omega_0)=0$ that 
\beq
\textnormal{div}(\hat\omega)=0 \;\;\textnormal{ in } \R^3.
\eeq
 Integrating over $\Omega$, we obtain
  \beq
  \int_\Omega \hat\omega\cdot\n d\sigma=0.
  \eeq
 On the other hand,  $\hat \omega (0) _{\vert \overline{\Omega} } =\omega _0$. 
 Therefore,  the condition about $\hat{\omega}$ for $ ( \hat l , \hat r, \hat \omega )$  to belong to $\mathcal{C}$ is satisfied. \\
 
 {\bf Step 3.} Let us turn our attention to $(\hat{l},\hat{r})$. Since $M^p_{1, \delta + 2} (\R ^3) \subset L^\infty(\R ^3)$ and $\nabla \hat v(t) \in M^p_{1, \delta + 2} (\R ^3)$ for all $t\in [0,T']$, 
 we infer from \eqref{X0} that for all $t\in [0,T']$
\[
\int_{\R ^3} |  \nabla \hat v |^4 \langle y\rangle ^{ p(\delta +2) }  dy  
\le \Vert  \nabla \hat v \Vert_{L^\infty ( \R ^ 3) } ^{4-p} \int_{\R ^3} | \nabla {\hat v} |^p \langle y\rangle ^{p(\delta +2)} dy
\le C \Vert \nabla \hat v\Vert ^4 _{M^p_{1,\delta +2} (\R ^3) }
 < + \infty .
\]
Furthermore,
\be
\Vert \nabla \hat v \Vert _{L^\infty (0,T', L^4 (\R ^ 3 ) ) }
\le C \Vert \nabla \hat v \Vert _{L^\infty (0,T', M^p_{1,\delta +2} (\R ^3)  ) } \le C_9 \N .  
\ee
Using  \eqref{system_mu} and \eqref{cota_v}, we infer  that
\beq
\|\nabla\mu\|_{L^\infty (0,T',L^2(\Omega) )}\leq C_{10} \N ^2.
\eeq
From \eqref{sistema_l_r},  we deduce that $(\hat l , \hat r) \in C([0,T' ];  \R ^6)$ with 
$$\norm{(\hat{l},\hat{r})-(l_0,r_0)}_{L^\infty (0,T') }\leq C_{11}T' \N ^2.$$
On the other hand, $(\hat l (0), \hat r(0) ) = (l_0,r_0)$. 
Therefore, the condition about $(\hat{l},\hat{r})$ for \T$\!\! (l, r, \omega)$ to belong to $\mathcal{C}$ is satisfied provided
that
\beq 
\label{cond02}
T'\leq\frac{N}{C_{11} \N ^2} \cdot
\eeq 
Hence for $T'$ satisfying \eqref{cond01} and \eqref{cond02}, one has \T$\!\!   (\mathcal{C}) \subset \mathcal{C}$.

Note also that, since $\nabla \hat v(t) \in M^p_{1,\delta +2} (\R ^3)$ for all $t\in [0,T]$, we have 
\[
\langle y\rangle \nabla \hat v (t) \in M^p_{1,\delta +1} (\R ^3)  \subset C_0(\R ^3 ) \quad
\textrm{ for all }  t\in [0,T']. 
\]
\\

{\bf Step 4.}
Now, we prove that  the operator  \T is a {\em contraction} in  $ {\mathcal C}$ for the distance induced by $\interleave \cdot \interleave$ for $T'$ small enough. 

From now on, the constant $C$ may depend on $\mathcal N$, but not on $T'$ or on $ (l ^k,r ^k,\omega ^k)$.

Assume given $(l^k,r^k,\omega^k) \in {\mathcal C}$, $k=1,2$.  
Note that $(l^1,r^1,\omega^1)$ and $(l^2,r^2,\omega ^2)$ correspond to the same initial data $(l_0,r_0,\omega _0)$ and the same control input $\omega$. 

Let us introduce for $k=1,2$ 
$$ \textnormal{\T} \!\! ( l^k , r^k , \omega^k ) := ( \hat{l}^k , \hat{r}^k , \hat{\omega}^k ).$$
Then,  for $k = 1,2$, $\hat\omega^k$ fulfills 
\beq\label{equation rot02}
\hat{\omega}^k(t,y)=A^k(0;t,y)\hat{\omega}_0(\hat{X}^k(0;t,y)),
\eeq
where $\hat{\omega}_0=\pi(w_0)$, $\hat{X}^k$ denotes the solution to
\beq\label{flux of v tilde 02}
\left\{
\begin{array}{ccl}
\D\frac{\partial}{\partial s} \hat{X}^k(s;t,y)&=&
\hat{v}^k(s,\hat{X}^k(s;t,y))-l^k(s)-r^k(s)\times\hat{X}^k(s;t,y),\\\\
\hat{X}^k(t;t,y)&=& y,

\end{array}\right.
\eeq
\beq\label{defG 02}
 G^k(s;t,y) : =\frac{\partial\hat{X}^k}{\partial y}(s;t,y),\;\;\;G(t;t,y)=Id ,
 \eeq
 and $A^k : =(G^k)^{-1}$. 
The velocity $\hat{v}^k=\pi(v^k) :\R ^3\to \R^3$ is the extension of the velocity $v^k:\Omega \to \R^3$ decomposed as
$$
v^k=\eta^k+\sum\limits_{i=1}^3l^k_i\nabla \phi_i+\sum\limits_{i=1}^3r^k_i\nabla \varphi_i
+\sum\limits_{1\leq j\leq m}w_j(t)\nabla\psi_j,
$$
where $\eta^k$ is the solution of 
\ba
\textnormal{curl}\,\eta^k &=& \omega^k, \quad  (t,y) \in (0,T)\times\Omega,\\
\textnormal{div}\,\eta^k &=& 0,\  \  \quad   (t,y) \in (0,T)\times\Omega, \\
\D \eta^k \cdot \n &= & 0, \ \ \quad  (t,y) \in (0,T)\times\partial\Omega, \\
\lim\limits_{\vert  y\vert \to+\infty}\eta^k(t,y) &=& 0,\ \ \  \quad  t\in (0,T) .
%\eta^i(0,y) &=& v_0(y)-\sum\limits_{j=1}^3l^i_j(0)\nabla \phi_j (0)-\sum\limits_{j=1}^3r^i_j(0)\nabla \varphi_j (0) -\sum\limits_{1\leq k\leq m}w_k(0)\nabla\psi_k (0), \  y\in \Omega. \qquad
\ea

We introduce the functions
\beq
v:=v^1-v^2,\;\;\eta :=\eta^1-\eta^2,\;\;r : =r^1-r^2,\;\; l: =l^1-l^2, \;\; \omega : =\omega^1-\omega^2, \textnormal{ and } A: =A^1-A^2.
\eeq
Thus $v$ may be written as 
\beq\label{eq_velocity01}
v=\eta+\sum\limits_{i=1}^3l_i\nabla \phi_i+\sum\limits_{i=1}^3r_i\nabla \varphi_i,\;\;v(0,y)=0,
\eeq
where $\eta$ is the solution to the system  
\ba
\textnormal{curl}\,\eta   &=&  \omega , \quad   (t,y) \in (0,T)\times\Omega, \\
\textnormal{div}\,\eta   &=&  0, \  \quad   (t,y) \in (0,T)\times\Omega, \\
\D \eta \cdot \n  &=& 0, \ \quad (t,y) \in (0,T)\times\partial\Omega, \\
\lim\limits_{\vert y \vert \to+\infty}\eta(t,y) &=& 0,\  \  \quad  t\in (0,T). %, \\
%\eta(0,y) &=& 0, \quad y\in \Omega .
\ea

{\bf Step 5.} 
Let $\hat{X} : =\hat{X}^1-\hat{X}^2.$ Then
\beq\label{flux of v tilde 03}
\left\{
\begin{array}{ccl}
\D\frac{\partial}{\partial s} \hat{X}(s;t,y)&=&\hat{v}^1(s,\hat{X}^1(s;t,y))-\hat{v}^1(s,\hat{X}^2(s;t,y))+\hat{v}(s,\hat{X}^2(s;t,y))\\
& &-l(s)-r^1(s)\times\hat{X}(s;t,y)-r(s)\times\hat{X}^2(s;t,y),\\\\
\hat{X}(t;t,y)&=& 0,
\end{array}\right.
\eeq
where %$\hat{v} ^ i := \pi(v^i)$, and 
$\hat{v} :=\hat{v}^1-\hat{v}^2 = \pi (v)$.

Taking the scalar product of each term  in \eqref{flux of v tilde 03}
 by $\hat X$ results in 
\[
\vert \hat X\vert \frac{\partial \vert \hat X \vert  }{\partial s} =\frac{\partial}{\partial s} (\frac{1}{2} \vert \hat X\vert ^2) 
= \hat X \cdot  \frac{\partial \hat X}{\partial s } = 
\big( \hat v^1 (s, \hat X^1) -\hat v^1 (s, \hat X^2) + \hat v (s,\hat X^2) -l-r\times \hat X^2 \big) \cdot \hat X.
\]
It follows that 
\[
\frac{\partial \vert \hat X \vert  }{\partial s}  \le C
\left(  \Vert \frac{\partial \hat  v^1}{\partial y} \Vert _{L^\infty (\R ^3)} \vert \hat X\vert 
+ \Vert \hat v\Vert_{L^\infty  ( \R ^3) } + \vert l \vert + \vert r\vert \cdot   \vert \hat  X^2\vert \right) .  
\]
Since 
\[
\Vert \hat v (s) \Vert _{L^\infty (\R ^3)} 
\le C  \Vert v (s) \Vert_{ C^{0, \alpha}  (\overline{\Omega} ) } 
\le C \left( \Vert \eta (s)  \Vert_{ C^{0, \alpha}  (\overline{\Omega} )}  + \vert l(s) \vert  + \vert r(s)\vert  \right) 
\le C\interleave (l,r,\omega )  \interleave 
\]
and 
\[
\vert \hat X^2\vert \le  | y |  + CT' \N  \le C \langle y \rangle ,
\] 
we obtain with Gronwall's lemma that for $(s,t,y)\in [0,T'] ^2\times \R ^3$, 
\begin{equation}
\vert \hat X (s,t,y) \vert  \le  e^{CT' } \int_0^{T'} C(1 + \langle y\rangle) \interleave (l,r,\omega )  \interleave dt\\
\le  CT' \langle y\rangle \interleave (l,r,\omega ) \interleave . \label{ppp1}
\end{equation}

%\beq\label{bound_flux}
%|\hat{X}(t,s,y)|\leq C\exp(CT'(\overline{N}+P))(|y|+CT'(\overline{N}+P+1))\int_0^t\Big(\|\hat{v}(\tau,\hat{X}^2(t,\tau,y))\|+|l(\tau)|+|r(\tau)|\Big)d\tau,\;\;\forall s\in[0,t].\eeq

{\bf Step 6.} 
Let us %introduce $A^i :=(G^i)^{-1}$ for $i=1,2$ and 
set $A:=A^1-A^2$. (Recall that $A^k=(G^k)^{-1}$ for $k=1,2$.) Then  we notice that 
$$\frac{\partial A^k}{\partial s}(s;t,y)=  -  A^k(s;t,y)
\Big(\frac{\partial\hat{v}^k}{\partial y}(s,\hat{X}^k(s;t,y))-S(r^k(s))\Big), \;\;\; A^k (t;t,y)=Id.$$
Thus 
\ba
\frac{\partial A}{\partial s}(s;t,y)&=& -A(s;t,y)\Big(\frac{\partial\hat{v}^1}{\partial y}(s,\hat{X}^1(s;t,y))-S(r^1(s))\Big)\nonumber\\
&&\quad -A^2(s;t,y)\Big(\frac{\partial\hat{v}^1}{\partial y}(s,\hat{X}^1(s;t,y))-\frac{\partial\hat{v}^1}{\partial y}(s,\hat{X}^2(s;t,y))\Big)\nonumber\\
&&\quad -A^2(s;t,y)\Big(\frac{\partial\hat{v}}{\partial y}(s,\hat{X}^2(s;t,y))-S(r(s))\Big),
\label{uuu1}\\
A(t;t,y) &=& 0. \label{uuu2}
\ea
It follows that 
\be
\label{uvw}
\Vert \frac{\partial A}{\partial s}(s;t,y) \Vert 
\le 
C\left( \Vert A\Vert \big( \Vert \frac{\partial \hat v^1}{\partial y} \Vert _{ L^\infty (\R ^3  )}   + \vert r^1\vert \big) + 
\Vert A^2\Vert \cdot \Vert \frac{\partial \hat v^1}{\partial y} \Vert _{W^{1,\infty} (\R ^3 ) } \vert \hat X\vert 
+\Vert A ^2\Vert  \big( \Vert \frac{\partial \hat  v}{\partial y} \Vert _{L^\infty (\R ^3 )} + \vert r\vert \big) \right) .  
\ee
From (\ref{cota_v}), we have that  
\[
\Vert \frac{\partial \hat v^1}{\partial y} (s)  \Vert _{ W^{1,\infty} (\R ^3 )}
\le \Vert \hat v^1(s)  \Vert _{ C^{2,\alpha } (\R ^3 )}
 \le C\N \le C. 
\]
Clearly, 
\[
\Vert A^2 \Vert + \vert r^1\vert \le C\N \le C.  
\]
On the other hand, it follows from Morrey's inequality that 
\[ \Vert \eta \Vert _{C^{0, 1-\frac{3}{p}} ( \overline{\Omega} ) } \le 
C \Vert \eta \Vert _{M^p_{1, \delta +1}} \le C \Vert \omega \Vert _{M^p_{0,\delta +2}} .\]
Let 
\[
\vert f\vert _{0,\alpha , \R ^3}  := \sup  \{ \frac{\vert  f(x)-f(y) \vert  }{|x-y|^\alpha } ; \ x,y\in \R^3, \ x\ne y\} .
\]
Then,  since $0<\alpha \le  1-\frac{3}{p}$, we have
\begin{eqnarray}
\Vert \frac{\partial \hat v}{\partial y} (s)  \Vert _{L^\infty (\R ^3 ) }  + \vert \frac{\partial \hat v}{\partial y} (s)  \vert _{0, \alpha  ,\R ^3} 
&\le& C \Vert v (s)  \Vert _{C^{1,\alpha }   (\overline {\Omega} ) } \nonumber \\
&\le& C \left(   \Vert \eta (s)  \Vert _{ C^{1,\alpha }  ( \overline{ \Omega} ) }   + \vert l(s)\vert + \vert r(s)\vert  \right) \nonumber \\ 
&\le& C \left(   \Vert \eta (s) \Vert _{C^{0, \alpha } (\overline{\Omega} ) } 
+ \Vert \omega  (s) \Vert _{C^{0,\alpha }  (\overline{\Omega} )}   + \vert l(s)\vert + \vert r(s)\vert  \right) \nonumber\\ 
&\le& C \left(   \Vert \omega  (s) \Vert _{M^p_{0, \delta + 2} } 
+ \Vert \omega (s)  \Vert _{C^{0,\alpha }  (\overline{\Omega} )}   + \vert l(s)\vert + \vert r(s)\vert  \right) \nonumber\\
&\le&  C \interleave (l,r,\omega )  \interleave . \label{qqq1}
\end{eqnarray}
We infer with (\ref{ppp1}) and \eqref{uvw} that 
\[
\Vert \frac{\partial A}{\partial s}(s;t,y) \Vert 
\le 
C \Vert A\Vert  
+  C(T' \langle y\rangle  + 1)  \interleave (l,r,\omega )  \interleave .
\]

Since $A(t ; t , y)=0$, we obtain by using Gronwall's lemma that for $(s,t,y)\in [0,T']^2\times \R ^3$ 
\beq
\Vert A(s;t,y)\Vert \le CT' \langle y\rangle \interleave (l,r,\omega )  \interleave . 
\label{ppp2}
\eeq
\\

{\bf Step 7.} 
Let $\hat\omega : =\hat\omega^1-\hat\omega^2$.
We first give an  estimate  of $\Vert \hat \omega\Vert _{ L^\infty  (\Omega ) } $. We write  %\eqref{bound_flux} and \eqref{bound_A} we have  
\ba
|\hat\omega| &=& 
\D \left|A^1(0;t,y)\hat{\omega }_0(\hat{X}^1(0;t,y))-A^2(0;t,y)\hat{\omega}_0(\hat{X}^2(0;t,y))\right| \nonumber\\
&\leq& \D\left|A(0;t,y)\hat{\omega }_0(\hat{X}^1(0;t,y))\right|+\left|A^2(0;t,y)\left(\hat{\omega }_0(\hat{X}^1(0;t,y))-\hat{\omega }_0(\hat{X}^2(0;t,y))\right)\right|.
%&\leq& \D T'C(P,\overline{N},\overline{R},\norm{\hat\omega_0}_{W^{1,\infty}(\R^3)})\left(\norm{{v}}_{L^\infty(0,T';C^{1}(\overline{\Omega}))}+|l|_{L^\infty(0,T')}+|r|_{L^\infty(0,T')}\right).
\ea
Since $\omega _0 \in M^p_{1, \delta + 2}$, we have by \cite[Lemma 2.2]{kikuchi86} that 
\be 
\label{ppp50}
\vert \hat \omega _0 (y) \vert  =O(\vert y\vert ^{-\delta -2})\  \textrm{ as } \ \vert y\vert \to + \infty ,
\ee
 so that
we infer from \eqref{X1} and  \eqref{ppp2} that 
\[
\left| A(0;t,y)\hat{\omega }_0(\hat{X}^1(0;t,y)) \right| \le CT' \langle y\rangle \interleave (l,r,\omega )  \interleave  \vert \hat{\omega }_0(\hat{X}^1(0;t,y) \vert \le CT' \interleave (l,r,\omega )  \interleave .
\] 
On the other hand, by  (\ref{Homega}), (\ref{X26}) and (\ref{ppp1}), we have that 
\[
\left|A^2(0;t,y)\left(\hat{\omega }_0(\hat{X}^1(0;t,y))-\hat{\omega }_0(\hat{X}^2(0;t,y))\right)\right|
\le \frac{C}{1 + \min (\vert \hat X^1\vert , \vert \hat X^2\vert )}\vert \hat X\vert  \le CT' \interleave (l,r,\omega )  \interleave ,  
\] 
where we used (\ref{X1}) to get $1+ \min (\vert \hat X^1\vert , \vert \hat X^2\vert ) \ge C\langle y\rangle$ for $y\in \Omega$ 
and $t\in [0,T']$.  
Thus, we have proved that for $T' >0$ satisfying \eqref{cond01} and \eqref{cond02}, we have 
\begin{equation}
\label{ppp3}
\Vert \hat \omega \Vert _{L^\infty (0,T'; L^\infty (\Omega ))}  \le CT' \interleave (l,r,\omega )  \interleave .
\end{equation}
\\

{\bf Step 8.} 
Let us now estimate the H\"older norm $\vert \hat \omega\vert_{0,\alpha }$.  Note first that it is not clear whether $\hat X \in C^{0,\alpha}(\overline{\Omega})$, since
it could happen that  $\hat X \sim \langle y\rangle$  as $\vert y\vert \to +\infty$ (and hence, $\hat X\not\in L^\infty (\Omega )$).  Rather, we shall prove that $\langle y\rangle ^{-1} \hat X\in C^{0, \alpha } (\overline{\Omega} )$. 

We infer from \eqref{flux of v tilde 03} that 
\begin{eqnarray*}
\D\frac{\partial}{\partial s} (\langle y\rangle ^{-1} \hat{X} (s;t,y) )&=& \langle y\rangle ^{-1} \left( \hat{v}^1(s,\hat{X}^1(s;t,y))-\hat{v}^1(s,\hat{X}^2(s;t,y))+\hat{v}(s,\hat{X}^2(s;t,y)) \right. \\
&  &\quad  \left. -l(s)-r^1(s)\times\hat{X}(s;t,y)-r(s)\times\hat{X}^2(s;t,y) \right) , \\
&=& \int _0^1 \frac{\partial \hat v^1} {\partial y} (s, \hat X^2 +\sigma \hat X) \langle y\rangle ^{-1} \hat X d\sigma \\
&& \quad  + \langle y\rangle ^{-1} (\hat v (s, \hat X^2) -l -r^1\times \hat X -r\times \hat X^2)\\ 
\end{eqnarray*}
Therefore, using \eqref{X21}, we obtain
\begin{eqnarray}
\vert \D\frac{\partial}{\partial s} ( \frac{ \hat{X} }{\langle y\rangle } ) \vert _{0, \alpha , \R ^3} &\le &
[ \Vert \frac{\partial \hat v^1 } {\partial y}\Vert _{L^\infty (\R ^3)} \vert \frac{ \hat X}{\langle y\rangle } \vert _{0,\alpha , \R ^3} 
+  \vert \frac{\partial \hat v^1 } {\partial y}\vert _{0, \alpha , \R ^3} \big( \Vert \frac{\partial \hat X^2}{\partial y} \Vert _{L^\infty (\R ^3 )} 
+ \Vert \frac{\partial \hat X^1}{\partial y} \Vert _{L^\infty (\R ^3 )} )^\alpha  
\Vert \frac{ \hat X}{\langle y\rangle } \Vert _{L^\infty (\R ^3)} ] \nonumber \\ 
&& + \vert \frac{  \hat v (s, \hat X^2 )}{\langle y\rangle } \vert _{0, \alpha, \R ^3}  + C  (1 + \vert \frac{ \hat X ^2}{\langle y\rangle } \vert _{0,\alpha, \R ^3 } ) \interleave (l,r,\omega )  \interleave 
+ C \vert \frac{ \hat X}{\langle y\rangle } \vert _{0,\alpha } . \label{ppp5}
\end{eqnarray}
It is clear that 
\be
 \Vert \frac{\partial \hat v^1 } {\partial y}\Vert _{L^\infty (\R ^3)} 
+  \vert \frac{\partial \hat v^1 } {\partial y}\vert _{0, \alpha  ,\R ^3 } +  \Vert \frac{\partial \hat X^2}{\partial y} \Vert _{L^\infty (\R ^3 )} 
+ \Vert \frac{\partial \hat X^1}{\partial y} \Vert _{L^\infty (\R ^3 )}  
\le C. \label{ppp6}
\ee
To bound  $ \vert \frac{ \hat X ^2}{\langle y\rangle }  \vert _{0,\alpha , \R ^3}$, we notice that 
$\frac{ \hat X ^2}{\langle y\rangle } $ solves the system 
\begin{eqnarray*}
\frac{\partial}{\partial s} (\frac{ \hat X ^2}{\langle y\rangle } ) &=& \frac{\hat v ^2 (s, \hat X^2 )}{\langle y\rangle } -\frac{l^2}{\langle y\rangle} 
-r^2 \times \frac{\hat X^2}{\langle y\rangle } ,\\
\frac{ \hat X ^2}{\langle y\rangle } (t;t,y) &=& \frac{y}{\langle y\rangle } \cdot
\end{eqnarray*}
Since $\vert \hat v^2 (s, \hat X^2 )\vert _{0,\alpha ,\R ^3 }  \le C  \vert \hat v^2\vert _{0, \alpha } \Vert \frac{\partial \hat X^2}{\partial y}\Vert ^\alpha _{ L^\infty (\R ^3 )} \le C$,   
$\Vert \hat v^2 (s, \hat X^2 )\Vert _{L^\infty (\Omega ) } \le C$,  
and both $\langle y\rangle ^{-1}$ and $\langle y\rangle ^{-1} y$ belong to $W^{1,\infty}(\R ^3) \subset C^{0,\alpha } (\R ^3)$, we obtain with Gronwall's lemma that 
\be
\vert \frac{ \hat X ^2}{\langle y\rangle } \vert _{0, \alpha , \R ^3 } \le C. 
\label{ppp7}
\ee
On the other hand, we infer from \eqref{X21} and \eqref{qqq1} that
\[
 \vert \frac{  \hat v (s, \hat X^2 )}{\langle y\rangle } \vert _{0, \alpha, \R ^3} \le C \left( \Vert \hat v (s, X^2)\Vert _{L^\infty (\R ^3)}  + 
  \vert  \hat v (s, \hat X^2 ) \vert _{0, \alpha, \R ^3} \right)  \le C  \interleave (l,r,\omega )  \interleave .
\]
It follows from \eqref{flux of v tilde 03}, \eqref{ppp5}-\eqref{ppp7} and Gronwall's lemma that 
\be
 \vert  \frac{ \hat{X} }{\langle y\rangle } \vert _{0, \alpha} \le CT'  \interleave (l,r,\omega )  \interleave .
\label{ppp8}
\ee

Next,  we prove that a similar estimate holds for $\vert \frac{A}{\langle y\rangle } \vert _{0, \alpha }$. 
Writing for $1\le i,j\le 3$
\[
\frac{\partial\hat{v}^1_i}{\partial y_j}(s,\hat{X}^1)-\frac{\partial\hat{v}^1_i}{\partial y_j}(s,\hat{X}^2) 
=\int_0^1 \frac{\partial }{\partial y} (\frac{\partial\hat{v}^1_i}{\partial y_j}) (s,\hat{X}^2 +\sigma \hat X ) \cdot \hat X d\sigma,
\] 
and using \eqref{uuu1},  we infer that 
\begin{eqnarray*}
\vert \frac{\partial }{\partial s} ( \frac{A}{\langle y\rangle } )  \vert _{0, \alpha } 
&\le & C\  \vert \frac{A}{\langle y\rangle }\vert _{0, \alpha } \left(  \Vert \frac{\partial \hat v^1}{\partial y }\Vert _{L^\infty (\Omega )}  + \vert r^1\vert \right) 
 +   \, C\, \Vert \frac{A}{\langle y\rangle}  \Vert _{L^\infty (\Omega )} \vert   \frac{\partial \hat v^1 }{\partial y } \vert _{0,\alpha} \Vert \frac{\partial \hat X^1}{\partial y} 
\Vert ^\alpha_{L^\infty (\Omega )}  \\
&& +  \, C\, \Vert A^2\Vert _{L^\infty (\Omega )} \left(  \vert  \hat v_1 \vert _{2,\alpha } ( \Vert \frac{\partial \hat X^2}{\partial y} \Vert _{L^\infty (\Omega )} 
+ \Vert \frac{\partial \hat X^1}{\partial y}\Vert _{L^\infty (\Omega )})^\alpha  \Vert \frac{\hat X}{\langle y\rangle } \Vert _{L^\infty (\Omega)}  \right. \\
&&\qquad \qquad  \qquad \quad \left. +\Vert \frac{\partial }{\partial y} (\frac{\partial\hat{v}^1}{\partial y}) \Vert _{L^\infty (\Omega)}  \vert \frac{\hat X}{\langle y\rangle }\vert _{0,\alpha}  \right) \\
&& +\, C\, \vert A^2\vert _{0,\alpha } \Vert \hat v^1 \Vert _{W^{2,\infty}(\Omega )} \Vert\frac{\hat X}{\langle y \rangle}  \Vert_{L^\infty (\Omega)} \\  
&&  + \, C\,  \vert \frac{A^2}{\langle y\rangle }\vert _{0, \alpha } \left(  \Vert \frac{\partial \hat v}{\partial y }\Vert _{L^\infty (\Omega )}  + \vert r\vert \right) 
 +   C\Vert \frac{A^2}{\langle y\rangle}  \Vert _{L^\infty (\Omega )} \vert   \frac{\partial \hat v}{\partial y } \vert _{0,\alpha} \Vert \frac{\partial \hat X^2}{\partial y} 
\Vert ^\alpha_{L^\infty (\Omega )}. 
\end{eqnarray*}
Then using  \eqref{ppp1}, \eqref{qqq1}, \eqref{ppp2}, we infer that 
\[
\vert \frac{\partial }{\partial s} ( \frac{A}{\langle y\rangle } )  \vert _{0, \alpha } \le C\vert \frac{A}{\langle y\rangle }\vert _{0, \alpha } + C(1+T')  \interleave (l,r,\omega )  \interleave .
\]
Therefore, using the fact that $A(t;t,y)=0$, we obtain with Gronwall's lemma that 
\be
\label{ppp10}
\vert  \frac{A}{\langle y\rangle }   \vert _{0, \alpha } \le C T' \interleave (l,r,\omega )  \interleave .
\ee
We are in a position to estimate $\vert \hat \omega \vert _{0,\alpha}$. We have 
\begin{eqnarray*}
\vert \hat\omega \vert _{0,\alpha } &\le & 
\left\vert A(0;t,y) \hat \omega _0(\hat X^1(0;t,y))\right\vert _{0, \alpha }  + \left\vert A^2(0;t,y)\left( \hat \omega _0(\hat X^1(0;t,y)) -\hat \omega _0 (\hat X ^2 (0;t,y)) \right) \right\vert  _{0, \alpha } \\
&\le& \left\vert \frac{A}{\langle y\rangle } \right\vert _{0, \alpha } \Vert \langle y\rangle \hat \omega _0 (\hat X^1)  \Vert _{L^\infty (\Omega ) } 
+ \Vert \frac{A}{\langle y \rangle }  \Vert _{L^\infty (\Omega )} \vert \langle y\rangle \hat \omega _0 (\hat X^1)\vert _{0,\alpha}  \\
&& + \vert A^2 \vert _{0, \alpha } \Vert \hat \omega _0 (\hat X^1) -\hat \omega _0 (\hat X^2) \Vert _{L^\infty (\Omega )} 
+ \Vert A^2 \Vert _{L^\infty (\Omega )} \vert \hat \omega _0 (\hat X^1) -  \hat \omega _0 ( \hat X^2 )\vert _{0, \alpha } \\
&\le& C(1+  \vert \langle y\rangle \hat \omega _0 (\hat X^1)\vert _{0,\alpha} ) T' \interleave (l,r,\omega )  \interleave  
+ C \vert \hat \omega _0 (\hat X^1) -\hat \omega _0 (\hat X^2) \vert _{0, \alpha} 
\end{eqnarray*} 
where we used \eqref{Homega}, \eqref{ppp1}, \eqref{ppp2},  and  \eqref{ppp10}.  It remains to estimate
$\vert \hat \omega _0 (\hat X^1) - \hat \omega _0 (\hat X^2) \vert _{0, \alpha}$  and  $\vert \langle y\rangle \hat \omega _0 (\hat X^1)\vert _{0,\alpha}$. 
For the first one,  we write 
\begin{eqnarray*}
\vert \hat \omega _0 (\hat X^1) - \hat \omega _0 (\hat X^2) \vert _{0, \alpha} 
&=& \left\vert \int_0^1 \frac{\partial \hat \omega _0}{\partial y} (\hat X^2 + \sigma \hat X) \hat X d \sigma \right \vert _{0, \alpha } \\
&\le& C \left( \sup_{\sigma \in (0,1)} \Vert \langle y\rangle \frac{\partial \hat  \omega _0}{\partial y} (\hat X^2 +\sigma \hat X) \Vert _{L^\infty (\Omega )}  \left\vert  \frac{\hat X}{\langle y\rangle } \right\vert _{0,\alpha} \right.\\
&&\qquad \left. + \sup_{\sigma \in (0,1)} 
\left\vert  \langle y\rangle  \frac{\partial \hat\omega _0}{\partial y} (\hat X^2 + \sigma \hat X) \right\vert _{0, \alpha} \Vert \frac{\hat X}{\langle y\rangle }\Vert _{L^\infty (\Omega )} 
\right)    \\
&\le & C\left( 1+  \sup_{\sigma \in (0,1)}  \left\vert  \langle y\rangle  \frac{\partial \hat\omega _0}{\partial y} (\hat X^2 + \sigma \hat X) \right\vert _{0, \alpha} \right) T' \interleave (l,r,\omega )  \interleave 
\end{eqnarray*}
where we used \eqref{zzz1},  \eqref{zzz2},  \eqref{ppp1},  \eqref{ppp8}, and the fact that  (using \eqref{X1} for $\hat X^2$)  
\[
\vert \hat X^2 +\sigma \hat X \vert \ge \vert y\vert +O(T')\langle y \rangle  \ge \frac{1}{2} \langle y\rangle  \textrm{ for } T' \textrm{ small enough} , \   \vert y\vert >1  \textrm{ and }
\sigma \in (0,1).
\] 

We aim to prove that 
\be
\label{ppp20}
\sup _{ \small \begin{array}{c}\sigma \in (0,1)\\ t\in [0,T'] \end{array}}
\left\vert  \langle y\rangle  \frac{\partial \hat\omega _0}{\partial y} 
(\hat X^2 (y)  + \sigma \hat X (y) ) \right\vert _{0, \alpha} < + \infty ,
\ee 
where we write $\hat X(y)$ for $\hat X(0 ; t , y)$, etc. 
We have with \eqref{zzz1} that 
\[
\sup _{\small \begin{array}{c}\sigma \in (0,1)\\ \vert y -y' \vert \ge 1 \\
t\in [0,T']
 \end{array}} \frac{ \left\vert  \langle y\rangle  \frac{\partial \hat\omega _0}{\partial y} (\hat X^2 (y)  + \sigma \hat X (y) )
- \langle y' \rangle  \frac{\partial \hat\omega _0}{\partial y} (\hat X^2(y') + \sigma \hat X(y'))\right\vert }{\vert y- y' \vert ^\alpha}  < + \infty.  
\]
On the other hand, for $\sigma\in (0,1)$, $\vert y-y'\vert <1$, and $t\in [0,T']$,  
\begin{eqnarray*}
&& \frac{ \left\vert  \langle y\rangle  \frac{\partial \hat\omega _0}{\partial y} (\hat X^2 (y) + \sigma \hat X (y) )
- \langle y' \rangle  \frac{\partial \hat\omega _0}{\partial y} (\hat X^2(y') + \sigma \hat X(y'))\right\vert }{\vert y- y' \vert ^\alpha}\\
 &&\quad \le \left\vert  \frac{\langle y\rangle - \langle y'\rangle }{\vert y-y'\vert ^\alpha}  
 \frac{\partial \hat \omega _0}{\partial y}  (\hat X^2 (y) + \sigma \hat X (y) ) \right\vert 
 +\langle y'\rangle  \frac{ \left\vert   \frac{\partial \hat\omega _0}{\partial y} (\hat X^2 ( y) + \sigma \hat X (y))
-  \frac{\partial \hat\omega _0}{\partial y} (\hat X^2(y') + \sigma \hat X(y'))\right\vert }{\vert y- y' \vert ^\alpha}\\
&&\quad \le C\Vert \frac{\partial \hat\omega _0}{\partial y}\Vert _{L^\infty (\Omega )} 
+ C \big( \Vert \frac{\partial \hat X^2}{\partial y} \Vert _{L^\infty (\Omega )}  + \Vert \frac{\partial  \hat X}{\partial y} \Vert _{L^\infty (\Omega )} \big) \vert y-y'\vert ^{1-\alpha}  
\end{eqnarray*}
where we used \eqref{zzz2} and the mean value theorem for the last term. 
This completes the proof of \eqref{ppp20}. We infer that 
\[
\vert \hat \omega _0 (\hat X^1) - \hat \omega _0 (\hat X^2) \vert _{0, \alpha} \le CT' \interleave (l,r,\omega )  \interleave . 
\]
We can prove in a very similar way that 
$\vert  \langle y\rangle  \hat\omega _0 (\hat X^1) \vert _{0, \alpha} < + \infty$. 
We conclude that 
\be
\vert \hat \omega (t) \vert _{0, \alpha} \le CT' \interleave (l,r,\omega )  \interleave  , \quad t\in [0,T']. 
\label{fff1}
\ee
\\

{\bf Step 9.} 
Let us estimate $\Vert \hat \omega \Vert _{L^p_{p(\delta + 2)} (\Omega )}$. We write
\[
\Vert \hat \omega \Vert _{L^p_{p (\delta +2) } (\Omega )} ^p 
\le
C\left(  \int _\Omega \vert A \hat \omega _0 (\hat X^1) \vert  ^p  \langle y\rangle ^{ p (\delta +2 )} dy + 
\int _\Omega |A^2 (\hat \omega _0 (\hat X^1) -\hat \omega _0 (\hat X^2)) \vert ^p \langle y \rangle ^{p(\delta +2) }dy \right)  =: C( I_1+I_2),   
\]  
where we have written $A^1$ for $A^1(0;t,y)$, $\hat X^1$ for $\hat X^1(0;t,y)$, etc. 

Then, using the fact that $\omega _0\in M^0_{p, \delta + 3} $ and \eqref{ppp2}, we obtain that 
\begin{eqnarray*}
I_1 &\le & ( CT' \interleave (l,r,\omega )\interleave)^p \int_\Omega \vert \hat \omega _0 (\hat X^1 (0;t,y) \vert ^p \langle y\rangle ^{p(\delta +3)}dy \\
&\le& ( CT' \interleave (l,r,\omega )\interleave)^p \int_{\R ^3}  \vert \hat \omega _0 (x) \vert ^p \langle \hat X ^1 (t;0,x) \rangle ^{p(\delta +3)}dx \\
&\le& ( CT' \interleave (l,r,\omega )\interleave)^p \Vert \hat \omega _0 \Vert ^p_{M^p_{0,\delta +3} (\R ^3)}.
\end{eqnarray*}
Therefore, increasing  the value of $C$ if needed, we obtain  
\[
I_1 \le   (CT' \interleave (l,r,\omega )\interleave )^p.
\]

For $I_2$, we infer from \eqref{Homega} with $\kappa > 3+ \delta + \frac{3}{p}$ that
\begin{eqnarray*}
I_2 
&\le& C\int _\Omega \vert \hat \omega _0 (\hat X^1) -\hat \omega _0 (\hat X^2) \vert ^p \langle y \rangle ^{p(\delta +2) }dy \\
&\le& C \int _\Omega \left( \frac{\vert \hat X\vert }{ [1+ \min(\vert \hat X^1\vert, \vert \hat X^2\vert ) ] ^\kappa } \right) ^p    \langle y \rangle ^{p(\delta +2) }dy \\
&\le& ( CT' \interleave (l,r,\omega )\interleave)^p \int_{\R ^3} \langle y\rangle ^{p(\delta + 3 -\kappa )} dy\\
&\le& (CT' \interleave (l,r, \omega )\interleave )^p.
\end{eqnarray*}
%Again, using the fact that $p\ge 1$, we infer that
% \[
%I_2 \le  CT' \interleave (l,r,\omega )\interleave .
%\]
We conclude that 
\be
\Vert \hat \omega (t)  \Vert _{L^p_{p(\delta +2) } (\Omega )} \le  CT' \interleave (l,r,\omega )\interleave , \quad t\in [0,T'].
\label{fff2}
\ee
\\

{\bf Step 10.} 
Let us proceed to the estimates of $v$.  Since $\alpha \le 1-\frac{3}{p}$, we have $M^p_{1, \delta + 1}  \subset C^{0,\alpha} (\overline{\Omega} )$, and hence
\begin{eqnarray*}
\Vert \nabla v\Vert _{C^{0,\alpha } (\overline{\Omega} )}  &\le & C ( \Vert \omega \Vert _{C^{0,\alpha} (\overline{\Omega} ) } +
 \Vert v\Vert _{C^{0,\alpha} (\overline{\Omega}) } + \vert l\vert + \vert r\vert )  \\
&\le &   C(  \Vert \omega \Vert _{C^{0,\alpha} (\overline{\Omega }) } + \Vert v\Vert _{M^p_{1, \delta + 1} } + \vert l\vert + \vert r\vert )  \\
&\le& C(  \Vert \omega \Vert _{C^{0,\alpha} (\overline{\Omega }) } + \Vert \omega \Vert _{M^p_{0, \delta + 2} } + \vert l\vert + \vert r\vert )  \\
&\le&  CT' \interleave (l,r,\omega )\interleave .
\end{eqnarray*}
It follows that 
\begin{eqnarray*}
\Vert \nabla v\Vert _{L^4 (\Omega ) } &\le& \Vert \nabla v\Vert ^\frac{p}{4} _{L^p(\Omega  )} \Vert \nabla v\Vert _{L^\infty (\Omega ) } ^{1-\frac{p}{4}} \\
&\le& C\Vert v\Vert _{M^p_{1, \delta +1}} ^\frac{p}{4} \Vert \nabla v\Vert_{C^{0, \alpha} (\overline{\Omega }) } ^{1-\frac{p}{4}}\\
&\le& C(  \Vert \omega \Vert _{C^{0,\alpha} (\overline{\Omega }) } + \Vert \omega \Vert _{M^p_{0, \delta + 2} } + \vert l\vert + \vert r\vert )  \\
&\le&  CT' \interleave (l,r,\omega )\interleave .
\end{eqnarray*}
\\

{\bf Step 11.} 
We now turn our attention to $\hat{l} : = \hat{l}^1-\hat{l}^2$ and $\hat{r} : =\hat{r}^1-\hat{r}^2,$ where for $k=1,2$ 
\beq\label{bbb1}
\left(\begin{array}{c}
\hat{l}^k(t) \\
\hat{r}^k(t)
\end{array}\right)
:=\left(\begin{array}{c}
l_0 \\
r_0
\end{array}\right)
+
\mathcal{J}^{-1}\int_0^t\left\{
\left(\begin{array}{c}
\Big(\int\limits_{\Omega} \nabla \mu^k(\tau,y)\cdot \nabla \phi_i (y) \,d y\Big)_{i=1,2,3}
\\
\Big(\int\limits_{\Omega} \nabla \mu^k(\tau,y)\cdot \nabla \varphi_i(y) \,d y\Big)_{i=1,2,3}
\end{array}\right) -
\left(\begin{array}{c}
m_0r^k\times l^k
\\
r^k\times J_0r^k
\end{array}\right)
\right\}d\tau,
\eeq
and the function $\mu^k : [0,T']\times \Omega  \rightarrow\R$ is defined as the solution to the system
\[
\left\{\begin{array}{ccll}
-\Delta \mu^k&=&\D\textnormal{tr\,}\Big(\nabla v^k\cdot \nabla v^k\Big),&\textnormal{ in }(0,T)\times\Omega, \\
\D\frac{\partial \mu^k}{\partial n}&=&\D-\!\!\!\sum\limits_{1\leq j\leq m}\dot{w}_j(t)\chi_j(y)
-\left(\Big((v^k-l^k-r^k\times y)\cdot\nabla\Big)v^k+r^k\times v^k\right)\cdot \n,&\textnormal{ on }(0,T)\times\partial\Omega ,\\ \\
\lim\limits_{|y|\to\infty}\nabla\mu^k(t,y)&=& 0 &\textnormal{ in }(0,T). 
\end{array}\right.
\]

Then  $\mu :=\mu^1-\mu^2$ satisfies the system
\[
\left\{\begin{array}{ccll}
-\Delta \mu&=&\D\textnormal{tr\,}\Big(\nabla (v^1+v^2)\cdot \nabla v\Big),&\textnormal{ in }(0,T)\times\Omega, \\
\D\frac{\partial \mu}{\partial n}&=&\D
-\left(\Big((v-l-r\times y)\cdot\nabla\Big)v^1+r\times v^1\right)\cdot \n&\\ \\
& &\D-\left(\Big((v^2-l^2-r^2\times y)\cdot\nabla\Big)v+r^2\times v\right)\cdot \n&
\textnormal{ on }(0,T)\times\partial\Omega ,\\ \\
 \lim\limits_{|y|\to\infty}\nabla\mu(t,y)&=&0 & \textnormal{ in }(0,T).
\end{array}\right.
\]
It follows that 
\begin{eqnarray}
\Vert \nabla \mu\Vert _{L^2 (\Omega ) } 
&\le& C\left( \Vert \nabla v^1\Vert _{L^4 (\Omega ) } + \Vert \nabla v^2\Vert _{L^4 (\Omega ) } \right) \Vert \nabla v\Vert _{L^4 (\Omega ) } 
+ C(\Vert v\Vert _{C^{0,\alpha} (\overline{\Omega   } ) } + \Vert \nabla v\Vert _{C^{0,\alpha} (\overline{\Omega } )  }    + \vert l\vert + \vert r\vert) \nonumber \\
&\le&  CT' \interleave (l,r,\omega )\interleave . \label{ccc1}
\end{eqnarray}
We infer from \eqref{bbb1} that $(\hat l , \hat r)$ satisfies
\[
\left(\begin{array}{c}
\hat l(t) \\
\hat r (t)
\end{array}\right)
=
\mathcal{J}^{-1}\int_0^t\left\{
\left(\begin{array}{c}
\Big(\int\limits_{\Omega} \nabla \mu  (\tau,y)\cdot \nabla \phi_i(y) \,d y\Big)_{i=1,2,3}
\\
\Big(\int\limits_{\Omega} \nabla \mu (\tau,y)\cdot \nabla \varphi_i(y) \,d y\Big)_{i=1,2,3}
\end{array}\right) -
\left(\begin{array}{c}
m_0 (r\times l^1 +r^2\times l) 
\\
r\times J_0r^1 +r^2\times J_0 r
\end{array}\right)
\right\}d\tau,
\]
and hence, with \eqref{ccc1}, 
\be
\label{fff3}
\vert \hat l (t) \vert + \vert \hat r(t)\vert \le  CT' \interleave (l,r,\omega )\interleave , \quad t\in [0,T']. 
\ee
Gathering together \eqref{ppp3}, \eqref{fff1}, \eqref{fff2} and \eqref{fff3}, we obtain 
 \be
\interleave (\hat l, \hat r, \hat \omega )\interleave 
 \le  CT' \interleave (l,r,\omega )\interleave , \quad t\in [0,T']. 
\ee
Thus, for $T'<1/C$, we have that 
\[
\interleave  \textnormal{\T} (l^1,r^1,\omega ^1)  - \textnormal{\T} (l^2,r^2,\omega ^2)  \interleave \le k\interleave   (l^1,r^1,\omega ^1) 
- ( l^2, r^2, \omega ^2) \interleave 
 \]
 for some constant $k\in (0,1)$, i.e. $ \textnormal{\T} $ is a contraction in $\mathcal C$. The proof of 
 Theorem  \ref{theoreme2.2} is complete. 
 \end{proof}

%%%%%%%%%%%%%%%%%%%%%%%%%%%%%%%%%

%%%%%%%%%%%%%%%%%%%%%%%%%%%%%%%%%

 \subsection{Existence of a solution of system \eqref{S1}-\eqref{S7}.}

Let us now check that the fixed-point  $(l,r,\omega)$ given in Theorem \ref{theoreme2.2} yields a solution of \eqref{S1}-\eqref{S7}. Let $v$ and $\mu$ be given by \eqref{eq_velo}-\eqref{eqEta_4} and \eqref{system_mu}, respectively. Then, since 
$(l,r,\omega ) \in {\mathcal C} \subset C([0,T'],{\mathcal F} )$, then \eqref{clas01} holds, $\nabla \mu \in C( [0,T'], L^2(\Omega))$ and hence, with 
 \eqref{sistema_l_r}, \eqref{clas03} holds as well. 
Let us set  
 \beq\label{pressure}
 {\bf q}:=\mu-\sum_{i=1}^3\dot{l}_i\phi_i-\sum_{i=1}^3\dot{r}_i\varphi_i.
 \eeq
 Then \eqref{clas02} holds and we have for a.e. $t\in (0,T')$, $q(t,.)\in C^{2, \alpha }_{loc} ( \overline{\Omega} )$  and $\lim_{ |y| \to + \infty} \nabla {\bf q} (t,y) =0$.  

\begin{proposition}
\label{proposition2.3}
Let $T'$ be as  in Theorem \ref{theoreme2.2} and let $(l,r,\omega)$  denote the corresponding fixed-point of  \T  in $\mathcal{C}$.
Then $(v, {\bf q},l,r)$ is a solution of  \eqref{S1}-\eqref{S7} in $(0,T')$. 
\end{proposition}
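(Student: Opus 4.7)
The plan is to verify the equations \eqref{S1}--\eqref{S7} one by one, using the fixed-point identities from Theorem~\ref{theoreme2.2} together with the explicit constructions of $v$, $\mathbf{q}$ and $(\hat l,\hat r)$.

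The kinematic equations \eqref{S2}, \eqref{S3}, \eqref{S4} are immediate from the decomposition \eqref{eq_velo}, the boundary/decay properties \eqref{eq for phi}--\eqref{eq for phi:02} of the Kirchhoff potentials, and the system \eqref{eqEta_1}--\eqref{eqEta_4} satisfied by $\eta$; the same decomposition yields $\textrm{curl}\,v=\textrm{curl}\,\eta=\omega$. At $t=0$, the field $v(0,\cdot)$ solves the same div--curl--trace--decay system as $v_0$ in \eqref{qwe}, so uniqueness (in Kikuchi's class) gives $v(0,\cdot)=v_0$, while $(l(0),r(0))=(l_0,r_0)$ by construction of $\mathcal C$; this is \eqref{S7}. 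A Lagrangian reading of the fixed-point identity \eqref{equation rot01} gives $\omega(t,\hat X(t;0,x))=(\partial\hat X/\partial y)(t;0,x)\,\omega_0(x)$; differentiating along the characteristics of $\tilde v$ and using $\partial_j(r\times y)_i=S(r)_{ij}$ produces the Eulerian vorticity equation
\beq\label{vorteq}
\partial_t\omega+(\tilde v\cdot\nabla)\omega=(\omega\cdot\nabla)v-r\times\omega.
\eeq

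For the momentum equation \eqref{S1}, I set $F:=\partial_t v+(\tilde v\cdot\nabla)v+r\times v+\nabla\mathbf{q}$ and show $F\equiv 0$ via uniqueness for the stationary div--curl system $\textrm{div}\,F=0$, $\textrm{curl}\,F=0$ in $\Omega$, $F\cdot\n=0$ on $\partial\Omega$, and $F(y)\to 0$ as $|y|\to\infty$. The divergence vanishes because the rigid-body contributions from $\textrm{div}((\tilde v\cdot\nabla)v)$ and $\textrm{div}(r\times v)$ cancel against each other, while $\Delta\mathbf{q}=\Delta\mu=-\textrm{tr}(\nabla v\cdot\nabla v)$ follows from \eqref{system_mu}, \eqref{pressure} and $\Delta\phi_i=\Delta\varphi_i=0$. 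The curl condition reduces precisely to \eqref{vorteq} once one notices that the gradient terms $\nabla(r\cdot v)$ produced by $\textrm{curl}\,(((r\times y)\cdot\nabla)v)$ and $\textrm{curl}\,(r\times v)$ combine, via the identity $-(r\cdot\nabla)v=r\times\omega-\nabla(r\cdot v)$, into the $r\times\omega$ appearing in \eqref{vorteq}. On $\partial\Omega$, differentiating \eqref{S3} in $t$ expresses $\partial_t v\cdot\n$, while \eqref{pressure} gives $\partial_\n\mathbf{q}=\partial_\n\mu-\dot l\cdot\n-(\dot r\times y)\cdot\n$; substituting the Neumann condition of \eqref{system_mu} and using the triple-product identity $\dot r\cdot(y\times\n)=(\dot r\times y)\cdot\n$ makes every boundary term cancel. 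Decay at infinity is inherited from that of $v$, $\nabla v$ and $\nabla\mathbf{q}$.

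Finally, the Newton--Euler equations \eqref{S5}--\eqref{S6} follow by differentiating \eqref{sistema_l_r} in $t$ and invoking the fixed-point identity $(\hat l,\hat r)=(l,r)$. Green's identity combined with $\Delta\phi_i=\Delta\varphi_i=0$ and the decay at infinity yields $\int_\Omega\nabla\mu\cdot\nabla\phi_i\,dy=\int_{\partial\Omega}\mu\,\n_i\,d\sigma$ and its $\varphi_i$ analog; substituting $\mathbf{q}$ from \eqref{pressure} and recognizing the matrices $M,N,J$ from \eqref{matrix 1}--\eqref{matrix J} in the resulting boundary integrals, the ``added-mass'' pieces $M\dot l+N\dot r$ and $N^\ast\dot l+J\dot r$ combine with the left-hand side contribution from $\mathcal J$ to produce exactly \eqref{S5}--\eqref{S6}. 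The most delicate step is the curl identity for $F$, because the tensor manipulations involving the rigid-body field $l+r\times y$ create several gradient pieces that must combine in the correct way; secondarily, one must verify that $F$ lies in the function class where the stationary div--curl uniqueness on the unbounded exterior $\Omega$ with the above vanishing data is available.
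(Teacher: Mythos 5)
Your overall architecture matches the paper's: expand $v$ via \eqref{eq_velo} to get \eqref{S2}--\eqref{S4} and \eqref{S7}, verify \eqref{S5}--\eqref{S6} from \eqref{sistema_l_r}--\eqref{pressure}, and obtain \eqref{S1} by showing a suitable residual $F$ vanishes because it solves a homogeneous div--curl system in $\Omega$ with $F\cdot\n=0$ and decay at infinity (via \cite[Lemma 2.7]{kikuchi86}). The vorticity bookkeeping is also correct: your equation $\partial_t\omega+(\tilde v\cdot\nabla)\omega=(\omega\cdot\nabla)v-r\times\omega$ is equivalent to the paper's \eqref{aaa20} since $(\omega\cdot\nabla)(r\times y)=r\times\omega$, and the vector identity $-(r\cdot\nabla)v=r\times\omega-\nabla(r\cdot v)$ you use to organize the curl computation is valid and leads to the same end result.

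The genuine gap is your definition of $F$. You set $F:=\partial_t v+(\tilde v\cdot\nabla)v+r\times v+\nabla\mathbf{q}$, which presupposes that $\partial_t v$ exists (with enough regularity and decay to apply the div--curl uniqueness lemma). At the stage of the proof where you are working, $v\in C([0,T'];C^{2,\alpha}\cap M^p_{2,\delta+1})$ only; time differentiability of $v$ is \emph{not} known a priori, because $v$ depends on $\eta$, which in turn depends on $\omega$ through a nonlocal div--curl solve. The paper deliberately avoids this by setting
\[
F(t,y):=v(t,y)-v_0(y)+\int_0^t\big(f(s,y)+\nabla\mathbf{q}(s,y)\big)\,ds
\]
and checking $\operatorname{div}F=0$, $\operatorname{curl}F=0$, $F\cdot\n=0$, $F\to0$. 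This time-integrated residual only involves quantities already controlled (in particular curl of the integral reduces to $\int_0^t\operatorname{curl}f\,ds$, which telescopes against $\omega(t)-\omega_0$ using \eqref{aaa20}, where $\partial_t\omega$ \emph{is} available from the explicit Lagrangian formula \eqref{equation rot01}). Only after concluding $F\equiv0$ does one \emph{deduce} that $v\in C^1([0,T'];C^{1,\alpha}_{\rm loc}(\overline\Omega))$. You yourself flag "one must verify that $F$ lies in the function class where the stationary div--curl uniqueness ... is available" — precisely this concern is what your formulation leaves unresolved, and it is exactly what the paper's integrated definition of $F$ is designed to circumvent. To make your argument complete, either adopt the time-integrated residual, or first establish independently that $\partial_t v$ exists with the required pointwise decay, which would amount to an extra argument about time regularity of the Biot–Savart-type solve for $\eta$.
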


%%%%%%%%%%%%%%%%%%%%%%%%%%%%%%%%
\begin{proof}[Proof of Proposition  \ref{proposition2.3}:] 
Let
\be
\label{aaa30}
f:=\big((v-l-r\times y)\cdot\nabla\big)v+r\times v .
\ee
Then we have that $f(t, .)\in C^{1,\alpha}_{loc} (\overline{\Omega })$ for all $t\in [0,T']$. 
On the other hand, since 
$v\in C([0,T'],M^p_{2, \delta +1})$, we have that $|v| \le C \langle y\rangle ^{-1-\delta }$ and $|\nabla v | \le C \langle y\rangle ^{-2-\delta }$, so that 
\[
| f(t, y) | \le C \langle y \rangle ^{-1-\delta }.  
\]
The divergence of $f$ is given by
\begin{eqnarray*}
\textnormal{div }f&=&\D\textrm{div\,}\Big( \big((v-l-r\times y)\cdot\nabla\big)v+r\times v\Big)\\
&=&\D\partial_i\Big(v_j\partial_jv_i\Big)-\partial_i\Big(l_j\partial_jv_i\Big)-\partial_i\Big((r\times y)_j\partial_jv_i\Big)+\textrm{div\,}\Big(r\times v\Big)\\
&=&\D(\partial_iv_j)\,(\partial_jv_i)                                                                                                                                                                                                                                                                                                           -(r\times \partial_iy)_j\partial_jv_i+\textrm{div\,}\Big(r\times v\Big)\\
&=&\D\textnormal{tr\,}\Big(\nabla v\cdot \nabla v\Big)                                                                                                                                                                                                                                                                                                           -\partial_j\Big((r\times v_i\partial_iy)_j\Big)+\textrm{div\,}\Big(r\times v\Big)\\
&=&\D\textnormal{tr\,}\Big(\nabla v\cdot \nabla v\Big)                                                                                                                                                                                                                                                                                                           -\textrm{div\,}\Big(r\times v\Big)+\textrm{div\,}\Big(r\times v\Big)\\
&=&\D\textnormal{tr\,}\Big(\nabla v\cdot \nabla v\Big)\\
&=&-\Delta \mu,
\end{eqnarray*}
where we used Einstein's convention of repeated indices and the fact that $\textnormal{div\,}(v) = 0$.
Therefore, using \eqref{eq for phi} and  \eqref{pressure}, we obtain
\beq\label{eq_f_div01}
\textnormal{div}\, (f+\nabla {\bf q})=0.
\eeq
Now we turn our attention to the curl of $f$.  Define $\tilde v :=v-l-r\times y$. Then
\beq \label{rot v tilde}
\textnormal{curl} \, \tilde{v}=\omega-2r.
\eeq
We shall use the following identities (see e.g. \cite{kikuchi86})
\ba
\label{identity 1}
\textrm{curl}(( v\cdot\nabla) v)&=&( v\cdot\nabla)\textrm{curl}( v)-(\textrm{curl}( v)\cdot\nabla) v+\textrm{div}( v)\textrm{curl}( v),\\
%\label{identity 2}
%( v\cdot \nabla) (r\times y)&=&r\times v,\\
\label{identity 3}
\textrm{curl}(r\times v)&=&\textrm{div}(v)r-(r\cdot\nabla)v.
\ea

Applying the operator $\textrm{curl}$  to $f$ and using \eqref{identity 1}-\eqref{identity 3}, we obtain 
\begin{eqnarray*}
\textnormal{curl} \, f&=& \textrm{curl}((\tilde v\cdot\nabla)\tilde  v)+\textrm{curl}((\tilde v\cdot \nabla) (l+ r\times y)) +\textrm{curl}( r\times v)\\
&=& \textrm{curl }((\tilde v\cdot\nabla)\tilde  v)+\textrm{curl}(r\times \tilde{v}) +\textrm{curl}( r\times v)\\
&=&  \textrm{curl }((\tilde v\cdot\nabla)\tilde  v)+\textrm{div}(\tilde v)r-(r\cdot\nabla)\tilde v+\textrm{div}(v)r-(r\cdot\nabla)v\\
&=&  \textrm{curl }((\tilde v\cdot\nabla)\tilde  v)-(r\cdot\nabla)\tilde v-(r\cdot\nabla)v\\
&=& ( \tilde v\cdot\nabla)\textrm{curl}( \tilde v)-(\textrm{curl}( \tilde v)\cdot\nabla) \tilde v-(r\cdot\nabla)\tilde v-(r\cdot\nabla)v\\
&=& ( \tilde v\cdot\nabla)(\omega-2r)- ((\omega-2r)\cdot\nabla) \tilde v-(r\cdot\nabla)\tilde v-(r\cdot\nabla)v\\
&=& ( \tilde v\cdot\nabla)\omega-\omega\cdot\nabla \tilde v.
\end{eqnarray*}

Using \eqref{equation rot01}, we see that $\omega$ satisfies
\beq\label{aaa20}
\frac{\partial \omega }{\partial t}+(\tilde{v}\cdot\nabla)\omega -(\omega\cdot\nabla)\tilde{v}=0, \quad t\in (0,T').
\eeq
It follows that
 \beq\label{eq_f_rot01}
 \textnormal{curl }f+\frac{\partial \omega }{\partial t}=0.
 \eeq
 On other hand, using \eqref{system_mu} we obtain that
 \ba
 f\cdot\n &=& -\frac{\partial \mu}{\partial n}-\!\!\!\sum\limits_{1\leq j\leq m}\dot{w}_j(t)\chi_j(y)\\
 &=&-\frac{\partial q}{\partial n}-(\dot{l}+\dot{r}\times y)\cdot\n -\!\!\!\sum\limits_{1\leq j\leq m}\dot{w}_j(t)\chi_j(y).\label{eq_f_fnu}
 \ea 

Introduce now the function
\beq
\label{aaa3}
F(t,y) :=v(t,y)-v_0(y) +\int_0^t (f(s,y)+\nabla {\bf q}(s,y) )ds.
\eeq
Then $F(t,.) \in C^{ 1 , \alpha}_{loc} (\overline{\Omega })$ for all $t\in [0,T']$. On the other hand, 
it follows from \eqref{eq_f_div01}, \eqref{eq_f_rot01} and \eqref{eq_f_fnu} that
\begin{eqnarray*}
\textnormal{div }F&=&0\;\;\;\textnormal{in }\Omega,\\
\textnormal{curl }F&=&0\;\;\;\textnormal{in }\Omega,\\
F\cdot\n&=&0\;\;\;\textnormal{on }\partial\Omega , \\
\lim_{ |y| \to + \infty} F(t,y) &=& 0. 
\end{eqnarray*}
Then we infer  from \cite[Lemma 2.7]{kikuchi86} that $F\equiv 0$. Taking into account the definition of $F$, this implies that 
$v\in C^1([0,T']; C^{1, \alpha}_{loc} (\overline{\Omega}))$ with  \eqref{S1} satisfied together with $v(0,.)=v_0$. Using  \eqref{eq_velo}-\eqref{eqEta_4}, we see that the equations \eqref{S2}-\eqref{S4} 
are satisfied. Finally, the equations \eqref{S5}-\eqref{S7} hold by  \eqref{sistema_l_r} and \eqref{pressure}.   
\qed

\subsection{Proof of the estimate \eqref{aaa1}.}

The potential solution $(\overline{l},\overline{r},\overline{v})$  of \eqref{S1}-\eqref{S7} associated with $l_0, r_0, 	\{w_j\}_{1\leq j\leq m}$, and
$\overline{\omega}_0 = 0$  is obtained in the following way. Since $\overline{\omega}_0=0,$ $\pi(\overline{\omega}_0) = 0$ in $\R^3$, and hence with \eqref{equation rot01}
 the vorticity $\overline{\omega}$ is null. Then we infer from  \eqref{eqEta_1}-\eqref{eqEta_4} 
 that $\overline{\eta} =0$ and from \eqref{eq_velo}
that
\beq \label{eq_veloPotential}
\overline{v}=\sum\limits_{i=1}^3\overline l_i\nabla \phi_i+\sum\limits_{i=1}^3\overline r_i\nabla \varphi_i+\sum\limits_{1\leq j\leq m}w_j(t)\nabla\psi_j.
\eeq

It follows from \cite[Proposition 2.3]{LR} that $(\overline l, \overline r)$ satisfies the ODE  \eqref{system lr}, whose solution is unique. 

Consider a solution $(l,r, \omega )$ associated with the same $l_0,r_0, \{w_j\}_{1\leq j\leq m}$ as for $(\overline{l}, \overline{r}, \overline{\omega})$, but with 
an initial vorticity $\omega _0$ not necessarily null.  It follows from \eqref{a1}-\eqref{a2} that for all $t\in [0,T']$
\[
\Vert \omega (t) \Vert _{C^{1, \alpha} (\overline{\Omega})  }
+ \Vert \omega (t) \Vert _{M^p_{1, \delta + 2}} \le P = e^e \cdot  ( C_6  \| \pi( \omega_0 ) \|_{C^{1,\alpha}(\R^3)} + C_7 \| \pi ( \omega _0 )\| _{  M^p_{1, \delta +2 } (\R ^3 )  } ).
\]
Now, from \eqref{eq_velo} and \eqref{eqEta_1}-\eqref{eqEta_4}, we infer that for all $t\in [0,T']$
\[
\Vert v(t) -\overline v(t) \Vert _{C^{2,\alpha} (\overline{\Omega})} + \Vert \nabla v(t) -\nabla \overline v(t)\Vert _{L^4(\Omega )}
\le C \left( P + \vert (l(t)-\overline l(t), r(t) -\overline r(t) )  \vert \right) . 
\]
Combined with \eqref{system_mu}, this yields
\[
\Vert \nabla \mu (t)  -\nabla \overline \mu (t)  \Vert _{L^2(\Omega )}
\le C \left( P + \vert ( l(t)-\overline l(t), r(t) -\overline r(t))\vert \right) . 
\]
Using \eqref{sistema_l_r}, we obtain 
\[
\vert ( \dot l (t) -\dot { \overline{l}} (t), 
\dot r(t) -\dot {\overline{r}} (t) )  \vert
\le C \left( P + \vert (l(t)-\overline l(t), r(t) -\overline r(t))\vert \right). 
\]
Then \eqref{aaa1} follows by using Gronwall's lemma.  The proof of Theorem \ref{theoreme1} is complete. \end{proof}

\subsection {Uniqueness and continuity with respect to the control}

The following result is concerned with the uniqueness of the solution $(l,r,v,{\bf q})$ of \eqref{S1}-\eqref{S7}, when the vorticity 
$\omega = \textrm{curl } v$ satisfies
\be
\label{rr1}
\omega (t,y) = G^{-1} (0; t,y) \pi (\omega _0)( \hat X (0; t, y))
\ee
where $G(s;t,y)= (\partial \hat X/\partial y)(s;t,y)$ and the flow $\hat X$ is defined on $[0,T']^2 \times \R^3$ by 
\be
\label{rr2}
\left\{
\begin{array}{ccl}
\D\frac{\partial}{\partial s} \hat{X}(s;t,y)&=&  \pi (v) (s,\hat{X}(s;t,y))-l(s)-r(s)\times\hat{X}(s;t,y),\\\\
\hat{X}(t;t,y)&=& y.
\end{array}\right.
\ee

\begin{proposition} Let $l_0,r_0,\omega _0,v_0$ and $T'$ be as in Theorem \ref{theoreme1}. Then the solution  $(v,{\bf q},l,r,\omega )$ of \eqref{S1}-\eqref{S7} and 
\eqref{rr1}-\eqref{rr2} is unique in the class \eqref{clas01}-\eqref{clas03} ($\bf q$ being unique up to the addition of an arbitrary function of time). 
On the other hand, for any given initial data $(l_0,r_0,\omega _0)$ as above and any $R>0$, 
the map $w\in {\mathcal B} :=\{ w\in C^1( [0,T'] ,\R ^m); \ \Vert w\Vert _{ C^1( [0,T] ) } \le R \}  \mapsto (l,r)\in C^0 ([0,T'], \R^6)$ is continuous.  
\label{prop2}

\end{proposition}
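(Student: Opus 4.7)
The strategy for both claims is to exploit the contraction structure of the operator $\mathcal{T}$ established in Theorem \ref{theoreme2.2}. For \emph{uniqueness}, let $(v^i, \mathbf{q}^i, l^i, r^i, \omega^i)$, $i=1,2$, be two solutions of \eqref{S1}-\eqref{S7} and \eqref{rr1}-\eqref{rr2} in the class \eqref{clas01}-\eqref{clas03} with the same data and control $w$. The first step is to show that each $(l^i, r^i, \omega^i)$ is in fact a fixed point of $\mathcal{T}$: the identity \eqref{rr1} is precisely $\omega^i = \hat\omega^i$; the div/curl/boundary system \eqref{S2}-\eqref{S4} together with $\textnormal{curl}\,v^i=\omega^i$ and the harmonic-potential decomposition forces $v^i$ to equal the field defined in \eqref{eq_velo} (by uniqueness of the div-curl system \cite[Proposition 3.1]{kikuchi86}); and testing \eqref{S5}-\eqref{S6} against $\nabla\phi_i, \nabla\varphi_i$, using the pressure representation \eqref{pressure} with $\mu$ given by \eqref{system_mu}, reproduces \eqref{sistema_l_r}. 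Since each solution has finite $L^\infty$ norms on the compact interval $[0,T']$, one may enlarge the constants to some $N^\star \ge N$, $P^\star \ge P$ so that both solutions lie in the corresponding closed set $\mathcal{C}^\star$. Re-running the contraction argument of Theorem \ref{theoreme2.2} on $\mathcal{C}^\star$ furnishes a smaller time $T'' \in (0,T']$ on which $\mathcal{T}$ is a contraction there; the unique fixed-point property then gives $(l^1, r^1, \omega^1) \equiv (l^2, r^2, \omega^2)$ on $[0,T'']$, whence $v^1=v^2$ and $\mathbf{q}^1=\mathbf{q}^2$ up to a function of $t$. A standard continuation argument, using that the contraction estimates are uniform in the initial time, extends the identification to the full interval $[0,T']$.

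For the \emph{continuity} statement, one first checks that $T'$ can be chosen uniformly on $\mathcal{B}$: the constants $\overline{N}, P, C_8, C_{11}$ entering the choice of $T'$ in Theorem \ref{theoreme2.2} depend on $w$ only through $\|w\|_{C^1}$, so a common $T'$ is admissible for all $w\in\mathcal{B}$. Given $w^1, w^2 \in \mathcal{B}$, let $(l^i, r^i, \omega^i)$ be the corresponding fixed points, and set $\Delta l := l^1-l^2$, $\Delta r := r^1-r^2$, $\Delta\omega := \omega^1-\omega^2$, $\Delta w := w^1 - w^2$. The plan is to reproduce Steps 4--11 of the proof of Theorem \ref{theoreme2.2}, now keeping track of the extra source terms generated by $\Delta w$: in the decomposition \eqref{eq_velo} through the $\nabla\psi_j$ contributions, in the transport equation \eqref{flux of v tilde} through the modified $\hat v$, and in the boundary data of \eqref{system_mu} through the $\sum_j \dot{w}_j\chi_j$ term. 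Each of these generates additional contributions bounded by $C\|\Delta w\|_{C^1}$ (where $C$ depends only on $R$ and on the geometry), yielding an estimate of the form
\[
\interleave (\Delta l, \Delta r, \Delta\omega) \interleave \;\le\; k \interleave (\Delta l, \Delta r, \Delta\omega) \interleave \;+\; C\|\Delta w\|_{C^1([0,T'])},
\]
with $k \in (0,1)$ independent of $w^1, w^2 \in \mathcal{B}$ after possibly shrinking $T'$ uniformly on $\mathcal{B}$. Absorbing the first term on the right gives $\interleave (\Delta l, \Delta r, \Delta\omega)\interleave \le \frac{C}{1-k}\|\Delta w\|_{C^1}$, which delivers the desired continuity of $w \mapsto (l,r)$ from $\mathcal{B}$ into $C^0([0,T'],\R^6)$.

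The main technical obstacle is bookkeeping rather than conceptual. For uniqueness, one must confirm that an arbitrary solution in the stated regularity class genuinely satisfies the fixed-point identity for $\mathcal{T}$ without a priori assumptions on its norm, which requires verifying the div-curl uniqueness and the projection identity \eqref{sistema_l_r} under only the integrability of \eqref{clas01}-\eqref{clas03}. For continuity, the difficulty lies in isolating the $\Delta w$-source terms throughout the long chain of Hölder and weighted $L^p$ estimates in Theorem \ref{theoreme2.2}, and in checking that the constants so produced remain bounded on $\mathcal{B}$ (hence the uniform choice of $T'$); once this is done, the absorption argument is immediate.
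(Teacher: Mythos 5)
Your proposal is correct and follows the same overall strategy as the paper: show that any solution in the stated class yields a fixed point of the operator $\mathcal{T}$, then invoke the contraction mapping theorem for uniqueness, and exploit the same contraction structure for the continuity claim. A few remarks on where the two implementations diverge.

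For uniqueness, your reconstruction of the fixed-point identity (via the div-curl decomposition and the pressure identity \eqref{pressure}/\eqref{system_mu}/\eqref{sistema_l_r}) matches the paper's, but you are more careful about the fact that an arbitrary solution need only lie in some enlarged set $\mathcal{C}^\star$ with possibly larger constants $N^\star,P^\star$, forcing a smaller contraction time $T''$ followed by a continuation argument. The paper glosses over this with the parenthetical remark that $T'$ may have to be taken small enough; your version closes that gap explicitly, which is a genuine improvement in rigor.

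For continuity, you and the paper take genuinely different technical routes from the same starting point. The paper invokes the contraction mapping theorem for a map depending on a parameter: it verifies (i) uniform contraction of $\mathcal{T}$ over $w\in\mathcal{B}$ and (ii) for fixed $(l,r,\omega)$, continuity of $w\mapsto\mathcal{T}(l,r,\omega)$, with the continuity of the $\omega$-component obtained qualitatively via Aubin--Lions compactness (using $\omega_0\in M^p_{0,\delta+3}$). You instead propagate the difference $\Delta w$ quantitatively through all the estimates of Steps 4--11 to get a Lipschitz bound $\interleave(\Delta l,\Delta r,\Delta\omega)\interleave\le C\|\Delta w\|_{C^1}/(1-k)$ and absorb. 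Your route delivers a stronger (Lipschitz) conclusion at the price of redoing the long chain of H\"older and weighted-$L^p$ estimates with the extra $\Delta w$ source terms; the paper's route avoids that bookkeeping but only yields continuity. Both are legitimate, and the hypotheses of the Proposition (inherited from Theorem \ref{theoreme1}) provide what each route needs.
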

\begin{proof}
Let  $(v,{\bf q},l,r)$ be a solution of \eqref{S1}-\eqref{S7} in the class 
\eqref{clas01}-\eqref{clas03}. Then we can expand $v$ in the form \eqref{eq_velo} with $\eta$ as in \eqref{eqEta_1}-\eqref{eqEta_4}. 
Then it is well-known that the vorticity $\omega = \textrm{curl } v$ satisfies the equation \eqref{aaa20} with $\tilde v = v - l - r\times y$, and that it is given by 
\eqref{rr1} ``away'' from the rigid body. We assume that it is given by \eqref{rr1} everywhere, even on $\partial \Omega$. Roughly speaking, this amounts
to specifying the tangent components of the vorticity on the inflow section.  
Let us show that  the pair $(l,r)$ satisfies \eqref{sistema_l_r}. Let $\mu$ be as in \eqref{pressure} and let $f$  be as in \eqref{aaa30}. 
Then by \eqref{S1} and the computations above, we have that 
\[
-\Delta \mu = - \Delta q = \textrm{ div }f = \textrm{tr } (\nabla v\cdot \nabla v),  
\]
and 
\begin{eqnarray*}
\frac{\partial \mu}{\partial n} &=& \frac{\partial q}{\partial n } + \sum_{i=1}^3 \dot l_i n_i + \sum_{i=1}^3 \dot r_i (y\times n)_i \\
&=& -(\frac{\partial v}{\partial t} + f )\cdot n + \dot l \cdot n + \dot r \cdot (y\times n) \\
&=& -\frac{\partial }{\partial t} \left(  [ l+r\times y]\cdot n  +\sum_{1\le j\le m} w_j(t) \chi _j(y) \right)  - f\cdot n  + \dot l \cdot n + \dot r \cdot (y\times n) \\
&=& -\sum_{1\le j\le m} \dot w_j (t) \chi _j (y) - \big( (v-l-r\times y)\cdot \nabla v + r\times v\big) \cdot n .
\end{eqnarray*}
Thus $\mu$ solves \eqref{system_mu}. Integrating in \eqref{S5}-\eqref{S6} and using \eqref{pressure}, we arrive to \eqref{sistema_l_r} with 
$(\hat l, \hat r) = (l,r)$. Thus $(l,r,\omega)$ is a fixed-point of \T. As there is (for $T'$ small enough) {\em 
only one} fixed-point of \T by the contraction mapping 
theorem, we infer that $(l,r,\omega )$ is unique.  Then $\eta$ is unique by   \eqref{eqEta_1}-\eqref{eqEta_4}, and $v$ is unique by \eqref{eq_velo}. Finally, 
$\nabla q$ is unique by \eqref{S1} and $q$ is unique (up to the addition of an arbitrary function of time). 

Let us proceed with the continuity with respect to the control.  Assume given some initial data $(l_0,r_0,\omega _0)$ as above and pick any number $R>0$. 
Let 
\[
{\mathcal B} :=\{ w\in C^1  ( [0,T], \R ^m); \Vert w\Vert _{C^1( [0,T] )} \le R\} .
\] 
Assume that the constants $C_8$ and $C_{11}$ are 
suitably chosen to be convenient for all $w\in {\mathcal B}$, and pick a time $T'>0$ convenient for all $w\in {\mathcal B}$. 
Then\\
(i) for $T'$ small enough, we have for $w\in {\mathcal B}$ and $(l^i,r^i,\omega ^i)\in {\mathcal C}$, $i=1,2$,
\[
\interleave  \textnormal{\T} (l^1,r^1,\omega ^1)  - \textnormal{\T} (l^2,r^2,\omega ^2)  \interleave \le k\interleave   (l^1,r^1,\omega ^1) 
- ( l^2, r^2, \omega ^2) \interleave , \quad 
 \]
 for some constant $k\in (0,1)$;\\
(ii) for given $(l,r,\omega)\in {\mathcal C}$, the map $w\in {\mathcal B} \mapsto (\hat l, \hat r, \hat \omega ) \in {\mathcal C} $ is continuous. 

Indeed, the map 
$w\in {\mathcal B} \mapsto v \in C ( [0,T'], C^{2, \alpha} ( \overline{\Omega}) \cap M^p_{2, \delta + 1} )$ is clearly continuous (using 
\eqref{eq_velo} and \eqref{eqEta_1}-\eqref{eqEta_4}), and hence the map $w\in {\mathcal B} \mapsto (l,r) \in C^1([0,T'], \R ^6)$ is continuous
(by \eqref{system_mu}-\eqref{sistema_l_r}). Finally, using the assumption $\omega _0\in M^p_{0, \delta  +3}$,  \eqref{a1}, \eqref{aaa20}, 
Aubin-Lions'  lemma and the continuity of $v$, one can see (as e.g. in \cite{GR}) that the 
map $w\in {\mathcal B} \mapsto \omega \in C([0,T'],C^{0,\alpha} (\overline{\Omega}) \cap L^p _{p(\delta + 2)} (\Omega))$ is continuous.    
 
It follows again from the contraction mapping theorem (for a map depending on a parameter) that the map which associates with 
$w\in {\mathcal B}$ the fixed-point $(l,r,\omega)\in {\mathcal C}$ is continuous. 
 \end{proof}

%%%%%%%%%%%%%%%%%%%%%%%%%%%%%%%%%%%
\section{Proof of main result}

We are now in a position to prove the main result in this paper. Let $T_0,P,N,K$ and $R$ be some
given positive numbers. Then by Theorem \ref{theoreme1}, there exists a time 
$T=T(T_0, P,N,K,R) \in (0,T_0]$ such that system \eqref{S1}-\eqref{S7} has a solution $(v,{\bf q}, l,r)$ 
for $t\in [0,T]$, with $(l,r,\omega )\in C([0,T],{\mathcal F}) $, provided that  $ \vert ( l_0,r_0) \vert \le 1 $, $\Vert w\Vert _{C^1([0,T_0]  )} \le R$ and 
 $\omega_0$ satisfies   \eqref{Homega}-\eqref{zzz2} and 
 \[
 \Vert \omega _0\Vert _{ C^{1,\alpha} (\overline{\Omega} ) } + \Vert \omega _0\Vert _{ M^p_{1, \delta +2} }
  \le P, \quad \textrm{div } \omega _0=0, \qquad \int_{\partial \Omega} \omega_0\cdot n \, d \sigma =0. 
 \]
Let $\Pi$ be a (continuous and linear) 
extension operator from $C^1([0,T])$ to $C^1([0,T_0])$ and pick $\delta :=R/ \Vert \Pi \Vert $.  
Then $\Vert \Pi (w)\Vert _{C^1 ( [0,T_0])}\le R$ if  $\Vert w\Vert _{C^1([0,T])} \le \delta $. In particular, 
using assumption (H) for the time $T$, we have  that $\Vert \Pi (w) \Vert _{ C^1( [0,T_0] )  } \le R$
if $w=W(h_0, \vec{q_0}, l_0,r_0, h_T, \vec{q_T}, l_T,r_T)$ for 
$\vert (h_0, \vec{q_0}, l_0,r_0, h_T, \vec{q_T}, l_T,r_T) \vert < \eta$
with $\eta >0$ small enough.  Then system \eqref{S1}-\eqref{S7} has a solution defined for $t\in [0,T]$ corresponding
to $(l_0,r_0,\omega _0, w)$ as above, and also a potential solution corresponding to the same data $(l_0,r_0,w)$ and
to $\bar{\omega} _0 \equiv 0$.

Let $\omega _0$ be as in the statement of Theorem \ref{thm1},  and write
$a_0=(h_0,\vec{q}_0)$, $b_0=(l_0,r_0)$, $a_T=(h_T,\vec{q}_T)$, and $b_T=(l_T,r_T)$.
Let $a(t):=( h(t),\vec{q}(t) )$ and $b(t)=(l (t), r(t))$. 
The proof is done in two steps. In the first step, we prove the result for 
$||\omega _0||_{C^{1,\alpha} (\overline{\Omega}) }$, $||\omega _0||_{M^p_{1, \delta + 2}}$,  $| l_{0}|$, $|r_{0}|$, $|l_{T}|$ and $|r_{T}|$ small enough, and in the second step, we remove
this assumption by performing a scaling in time.\\[2mm]

%
%\ \par
\noindent
{\sc Step 1.} Let the map $W$ be as in the assumption (H) for the time $T$.
We may pick a number  $\eta_{1} \in (0,1)$ such that
$w=W(a_0,b_0,a_T,b_T)$ is defined for $|(a_0,b_0)| \le  \eta _1$ and  $|(a_T,b_T)| \le  \eta _1$, with
\[ 
||w||_{C^1([0,T])}\le \delta  .
\] 
Pick any initial state $(a_0,b_0)=(h_0,\vec{q}_0,l_0,r_0)$ with 
$|(a_0,b_0)| \le \eta _1$. For any given $(a_T,b_T,v_0)$ with $|(a_T,b_T)| \le \eta _1$,  we denote by
$(h,\vec{q}, l, r,v,{\mathbf q})$ the solution of \eqref{S1}-\eqref{S7} and \eqref{systempq} corresponding to
the velocity $v_0$ and to the control 
$w=W(a_0,b_0,a_T,b_T)$, and  by
$(\overline{h},\overline{\vec{q}},\overline{l},\overline{r}, \overline{v}, \overline{\mathbf q})$ 
the solution corresponding to  $(a_0,b_0)$ together with  the velocity $\overline{v}_0$ which solves
\begin{eqnarray*}
\text{\rm curl } \overline{v}_0 &=& 0,\qquad \textrm{ in } \Omega ,\\
\text{\rm div } \overline{v}_0  &=& 0,\qquad \textrm{ in } \Omega , \\
\overline{v}_0\cdot n  &=& (l_0+r_0\times y)\cdot n  ,\quad \textrm { on } \partial \Omega , \\
\lim_{|y|\to \infty } \overline{v}_0 (y) &=&0,
\end{eqnarray*}
and to the (same) control $w$. 
From \eqref{aaa1} we infer 
that there exists some constant  $C_1>0$ such that 
\begin{equation}
\label{WWW}
||(l-\overline{l} , r-\overline{r})||_{L^\infty (0,T)} 
\le C_1 \Big(\norm{\omega_0}_{C^{1,\alpha}(\overline{\Omega})}+\norm{\omega_0}_{M^p_{1,\delta + 2 } }\Big),
\eeq
whenever 
\begin{equation}
\label{condition}
|(l_0,r_0)|\le 1,\quad 
||\omega _0||_{C^{1,\alpha } ( \overline{\Omega} ) } + ||\omega  _0||_{ M^p_{1, \delta + 2 }}  \le P,\quad 
\text{ and } \ ||w||_{ C^1( [0,T] ) }\le \delta .
\end{equation}
Combined to the equations
\[
\left\{\begin{array}{ccl}
h'(t)&=&  (1- |  \vec{q} | ^2) l+ 2\sqrt{1- | \vec{q} | ^2}\,   \vec{q}\times l 
 + (l\cdot  \vec{q}\, ) \vec{q} -  \vec{q} \times l\times  \vec{q}, \\[3mm]
{\vec{q}\, }'(t)&=& \frac{1}{2} (\sqrt{1- | \vec{q} |^2} \, r +  \vec{q} \times r ), \\[3mm]
{\overline h}'(t)&=&  (1-  |  \overline{\vec{q}} | ^2) \overline{l} + 
2\sqrt{1- | \overline{\vec{q}} | ^2}\,  \overline{ \vec{q} }\times \overline{ l}  + 
(\overline{l}\cdot  \overline{\vec{q}}\, )
\overline{\vec{q}} -  \overline{\vec{q}} \times \overline{l}\times  \overline{\vec{q}}, \\[3mm]
{ \overline{\vec{q}}\, }'(t)&=& \frac{1}{2} (\sqrt{1- | \overline{\vec{q}} |^2} \, \overline{r} +  \overline{\vec{q}} \times \overline{r} ), \\[3mm]
h(0)&=&\overline{h} (0)=h_0, \quad \vec{q}(0)=\overline{\vec{q}}(0)=\vec{q}_0,\\[3mm] 
\end{array}\right. 
\]
this gives for some constant $C_2>0$
\begin{equation}
\label{AvecOuSansBis}
||(h-\overline{h}, \vec{q} -\overline{\vec{q}})||_{L^\infty (0,T)}
\le C_2 \left( ||\omega _0||_{C^{1,\alpha } ( \overline{\Omega} ) } + ||\omega  _0||_{ M^p_{1, \delta + 2} } \right),
\end{equation}
provided that \eqref{condition} holds.
Let  $f:\overline{B} =\{x\in \R ^{12};\ |x|\le 1\}\to \R^{12}$ be defined by
\[
f(x_T)= \frac{1}{\eta_{1}} (a(T),b(T))=\frac{1}{\eta _1} (h(T),\vec{q}(T), l(T), r(T))
\]
where $(a_T,b_T)=: \eta _1 x_T$. \par
We notice that $f$ is continuous, by virtue of Proposition \ref{prop2} and \eqref{systempq}.
Pick any $\varepsilon \in (0,1)$. From \eqref{WWW} and \eqref{AvecOuSansBis}, we deduce that for
\begin{eqnarray}
%\label{condvit}
%&&|(l_0,r_0)| \le 1,\\
%&& | (a_0,b_0 | \le \eta _1, \ \ | ( a_T , b_T) | \le \eta _1, \\
%&&\Vert w\Vert _{C^1( [0,T] )} \le \delta,\\
&&|| \omega_0 ||_{C^{1,\alpha}(\overline{\Omega} )}
+ || \omega _0  ||_{M^p_{1, \delta + 2 }}
<\nu 
\label{condvor}
\end{eqnarray}
with $\nu  >0$ small enough, we have that 
\[
|f(x_T)-x_T|<\varepsilon,\qquad \text{ for } |x_T|\le 1.
\]
We need the following topological result \cite[Lemma 4.1]{GR}.
\begin{lemma}
\label{topo}
Let $B=\{ x\in \R ^n;\ |x|<1\}$ and $S=\partial B$. Let $f:\overline{B}\to \R ^n$ 
be a continuous map such that for some constant $\varepsilon \in (0,1)$
\begin{equation}
|f(x)-x| \le \varepsilon\qquad \forall x\in S.
\label{deg1} 
\end{equation}
Then 
\begin{equation}
\label{deg2}
(1-\varepsilon) B \subset f(\overline{B}).
\end{equation}
\end{lemma}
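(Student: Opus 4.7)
The plan is to invoke the Brouwer topological degree and homotopy invariance. Fix an arbitrary target $y_0 \in (1-\varepsilon) B$, so $|y_0| < 1-\varepsilon$, and consider the straight-line homotopy
\[
H : [0,1] \times \overline{B} \to \R^n , \qquad H(s,x) := sf(x) + (1-s)x .
\]
The first step is to check that the homotopy avoids $y_0$ on the boundary $S$. For $x \in S$ and $s \in [0,1]$, the hypothesis \eqref{deg1} gives
\[
| H(s,x) - x | = s | f(x) - x | \le s\varepsilon \le \varepsilon ,
\]
so $|H(s,x)| \ge |x| - \varepsilon = 1 - \varepsilon > |y_0|$. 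In particular $H(s,x) \ne y_0$ for every $(s,x) \in [0,1] \times S$.

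The second step is to apply homotopy invariance of the Brouwer degree to deduce
\[
\deg(f, B, y_0) = \deg(H(1,\cdot), B, y_0) = \deg(H(0,\cdot), B, y_0) = \deg(\mathrm{id}, B, y_0) = 1 ,
\]
where the last equality is the normalization property of the degree together with $y_0 \in B$. Since the degree is nonzero, the existence property of the degree yields some $x \in B$ with $f(x) = y_0$. As $y_0 \in (1-\varepsilon) B$ was arbitrary, this establishes \eqref{deg2}.

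There is no serious obstacle in this argument; the only substantive point is the boundary estimate $|H(s,x)| > |y_0|$ on $S$, which is exactly what the hypothesis $|f(x) - x| \le \varepsilon$ on $S$ is tailored to give. One could alternatively avoid degree theory by invoking Brouwer's fixed-point theorem applied to the map $x \mapsto x - f(x) + y_0$ composed with a radial retraction onto $\overline{B}$, and then rule out fixed points on $S$ via the same boundary estimate; but the degree-theoretic formulation is the shortest route.
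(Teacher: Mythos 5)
Your proof is correct. The paper itself does not supply a proof of this lemma --- it cites it directly as \cite[Lemma 4.1]{GR} --- but the degree-theoretic argument you give, with the straight-line homotopy $H(s,x)=sf(x)+(1-s)x$ and the boundary estimate $|H(s,x)|\ge 1-\varepsilon>|y_0|$ on $S$ guaranteeing the well-definedness and homotopy invariance of $\deg(\cdot,B,y_0)$, is the standard proof of this result and is complete as written.
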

\noindent
\ \par
Thus, we infer from Lemma \ref{topo} that if $(a_0,b_0,a_T,b_T)\in \R ^{24}$ is such that
\[
|(a_0,b_0)| <\eta _1,\quad
|(a_T,b_T)|< \eta_{2} := \eta_{1} (1-\varepsilon ),
\]
and \eqref{condvor}  is satisfied,  
then there exists a control $w=W(a_0,b_0,\tilde a_T,\tilde b_T)$ for which the solution of \eqref{S1}-\eqref{S7} and \eqref{systempq} satisfies
$(h(T),q(T),l(T), r(T))= (a(T),b(T)) =  (a_T,b_T)$.\\[3mm]
\noindent
{\sc Step 2.} To drop the assumptions $ |b_0| <\eta _1$, $ |b_T | < \eta _2$, and \eqref{condvor} (corresponding to a 
given time $T\in (0,T_0]$), we 
use a  scaling in time introduced in \cite{coron1} for the control of
Euler equations. Let  $(a_0,b_0)$, $(a_T,b_T)$, and $v_0$ be given data with 
\begin{equation*}
|a_0|<\eta_{2} \ \textrm{ and }  \  |a_T|< \eta_{2}.
\end{equation*}
We set $b_0^\lambda := \lambda b_0$, $b_T^\lambda := \lambda b_T$, and $v_0^\lambda :=\lambda v_0$.
Then for $\lambda >0$ small enough, we have that 
\[
|(a_0,b_0^\lambda )|<\eta_{2}, \quad |(a_T,b_T^\lambda )|< \eta_{2},
\]
and  $\omega_0^\lambda :=\text{curl } v_0^\lambda$ satisfies
\[
|| \omega_0^\lambda  ||_{C^{1,\alpha}(\overline{\Omega} )}
+ || \omega _0^\lambda   ||_{M^p_{1, \delta + 2}}
<\nu .
\]
By Step 1, there exists some trajectory  $(a^\lambda, b^\lambda)$ for the underwater vehicle connecting $(a_0,b_0^\lambda )$ at $t=0$ to $(a_T,b_T^\lambda )$ 
at $t=T$, with corresponding fluid velocity $v^\lambda$, pressure ${\mathbf q}^\lambda$,  and control $w^\lambda$. Let us set
\begin{eqnarray*}
a(t)& :=&a^\lambda (\lambda^{-1}t),\\
b(t)&: =&\lambda^{-1}  b^\lambda (\lambda^{-1}t),\\
v(t,y)&: =&\lambda ^{-1} v^\lambda (\lambda ^{-1}t,y),\\
{\mathbf q} (t,y) &: =& \lambda ^{-2} {\mathbf q}^\lambda (\lambda ^{-1}t, y),\\ 
w(t)  &: =&\lambda ^{-1} w^\lambda (\lambda ^{-1}t),
\end{eqnarray*}
for $y\in \Omega$ and $0\le t\le T _\lambda :=\lambda T \in (0,T_0]$. Then  $(a,b)$ is a trajectory for the underwater vehicle connecting $(a_0,b_0)$ at $t=0$ to $(a_T,b_T)$ at 
$t=T_\lambda $ and corresponding to the initial fluid velocity $v_0$. \qed
%
%
%

%%%%%%%%%%%%%%%%%%%%%%%%%%%%%%%%%%%

\section{Acknowledgements}
The first author (RL) was partially supported by Basal-CMM project, PFB 03. The second author
(LR) was partially supported by the Agence Nationale de la Recherche,  project ANR 15 CE23 0007 01
(Finite4SoS). 

\bibliographystyle{abbrv}
\bibliography{control}

\def\cprime{$'$}
\begin{thebibliography}{10}

\bibitem{CCOR}
C.~Conca, P.~Cumsille, J.~Ortega, and L.~Rosier.
\newblock On the detection of a moving obstacle in an ideal fluid by a boundary
  measurement.
\newblock {\em Inverse Problems}, 24(4):045001, 18, 2008.

\bibitem{CMM}
C.~Conca, M.~Malik, and A.~Munnier.
\newblock Detection of a moving rigid body in a perfect fluid.
\newblock {\em Inverse Problems}, 26:095010, 2010.

\bibitem{coron1}
J.-M. Coron.
\newblock On the controllability of {$2$}-{D} incompressible perfect fluids.
\newblock {\em J. Math. Pures Appl. (9)}, 75(2):155--188, 1996.

\bibitem{coron-book}
J.-M. Coron.
\newblock {\em Control and nonlinearity}, volume 136 of {\em Mathematical
  Surveys and Monographs}.
\newblock American Mathematical Society, Providence, RI, 2007.

\bibitem{GR}
O.~Glass and L.~Rosier.
\newblock On the control of the motion of a boat.
\newblock {\em Math. Models Methods Appl. Sci.}, 23(4):617--670, 2013.

\bibitem{GST}
O.~Glass, F.~Sueur, and T.~Takahashi.
\newblock Smoothness of the motion of a rigid body immersed in an
  incompressible perfect fluid.
\newblock {\em Ann. Sci. \'Ec. Norm. Sup\'er. (4)}, 45(1):1--51, 2012.

\bibitem{Yudovich64}
V.~I. Judovi{\v{c}}.
\newblock A two-dimensional non-stationary problem on the flow of an ideal
  incompressible fluid through a given region.
\newblock {\em Mat. Sb. (N.S.)}, 64 (106):562--588, 1964.

\bibitem{KMPT}
T.~Kato, M.~Mitrea, G.~Ponce, and M.~Taylor.
\newblock Extension and representation of divergence-free vector fields on
  bounded domains.
\newblock {\em Math. Res. Lett.}, 7(5-6):643--650, 2000.

\bibitem{Kazhikhov}
A.~V. Kazhikhov.
\newblock Note on the formulation of the problem of flow through a bounded
  region using equations of perfect fluid.
\newblock {\em Prikl. Matem. Mekhan.}, 44(5):947--950, 1980.

\bibitem{kikuchi86}
K.~Kikuchi.
\newblock The existence and uniqueness of nonstationary ideal incompressible
  flow in exterior domains in {${\bf R}\sp 3$}.
\newblock {\em J. Math. Soc. Japan}, 38(4):575--598, 1986.

\bibitem{LR}
R.~Lecaros and L.~Rosier.
\newblock Control of underwater vehicles in inviscid fluids---{I}.
  {I}rrotational flows.
\newblock {\em ESAIM Control Optim. Calc. Var.}, 20(3):662--703, 2014.

\bibitem{ORT1}
J.~H. Ortega, L.~Rosier, and T.~Takahashi.
\newblock Classical solutions for the equations modelling the motion of a ball
  in a bidimensional incompressible perfect fluid.
\newblock {\em M2AN Math. Model. Numer. Anal.}, 39(1):79--108, 2005.

\bibitem{ORT2}
J.~H. Ortega, L.~Rosier, and T.~Takahashi.
\newblock On the motion of a rigid body immersed in a bidimensional
  incompressible perfect fluid.
\newblock {\em Ann. Inst. H. Poincar\'e Anal. Non Lin\'eaire}, 24(1):139--165,
  2007.

\bibitem{RR2008}
C.~Rosier and L.~Rosier.
\newblock Smooth solutions for the motion of a ball in an incompressible
  perfect fluid.
\newblock {\em J. Funct. Anal.}, 256(5):1618--1641, 2009.

\bibitem{stein}
E.~M. Stein.
\newblock {\em Singular integrals and differentiability properties of
  functions}.
\newblock Princeton Mathematical Series, No. 30. Princeton University Press,
  Princeton, N.J., 1970.

\bibitem{sueur}
F.~Sueur.
\newblock A {K}ato type theorem for the inviscid limit of the {N}avier-{S}tokes
  equations with a moving rigid body.
\newblock {\em Comm. Math. Phys.}, 316(3):783--808, 2012.

\bibitem{WZ}
Y.~Wang and A.~Zang.
\newblock Smooth solutions for motion of a rigid body of general form in an
  incompressible perfect fluid.
\newblock {\em J. Differential Equations}, 252(7):4259--4288, 2012.

\end{thebibliography}
\end{document}